\newtheorem{theo}{Theorem}[section] 
\newtheorem{defi}[theo]{Definition}
\newtheorem{lemm}[theo]{Lemma}
\newtheorem{rema}[theo]{Remark}
\newtheorem{coro}[theo]{Corollary}
\newtheorem{assumption}[theo]{Assumption}
\newcommand{\beq}{\begin{equation}}
\newcommand{\eeq}{\end{equation}}
\newcommand{\beqs}{\begin{equation*}}
\newcommand{\eeqs}{\end{equation*}}
\newcommand{\C}{\mathbb C}
\newcommand{\R}{\mathbb R}
\newcommand{\bigO}{{\cal O}}
\newcommand{\imagunit}{{\bf i}}
\newcommand{\hinfsym}{\mathcal{H}}
\newcommand{\linfsym}{\mathcal{L}}
\newcommand{\Hinf}{{\hinfsym_\infty}}
\newcommand{\Linf}{{\linfsym_\infty}}
\newcommand{\Cmn}[2]{\C^{{#1}\times{#2}}}  
\renewcommand\Re{\operatorname{Re}}
\renewcommand\Im{\operatorname{Im}}
\newcommand{\Real}[1]{\Re{\left({#1}\right)}}
\DeclareMathOperator*{\argmax}{arg\,max}
\DeclareMathOperator*{\Arg}{Arg}
\newcounter{subroutine}
\newenvironment{subroutine}[1][htb]{%
  \let\c@algorithm\c@subroutine
  \renewcommand{\ALG@name}{Subroutine}
  \begin{algorithm}[#1]%
  }{\end{algorithm}
}
\def\emach{\epsilon_\mathrm{mach}}
\def\eps{\varepsilon}
\newcommand\fnsub[3][]{
 	\ifstrempty{#1}{	
		#2_{#3}
	}{
		#2_{#3\resizebox{0.7\width}{0.8\height}{=}#1}
	}
}
\newcommand\ntfx[1][]{\fnsub[#1]{g}{x}}
\newcommand\ntfy[1][]{\fnsub[#1]{g}{y}}
\newcommand\ntft[1][]{\fnsub[#1]{g}{\theta}}
\def\cs{\Omega}
\def\eigderivmat{H}
\def\abcde{(A,B,C,D,E)}
\newcommand{\aleps}{\alpha_\varepsilon}
\newcommand{\rhoeps}{\rho_\varepsilon}
\newcommand{\SVSeps}{\spec_\eps}
\newcommand{\spec}{\sigma} 
\newcommand{\norm}[1]{\left\lVert#1\right\rVert}
\newcommand{\norms}[1]{\norm{#1}_2}
\def\Mc{\mathcal{M}_{\gamma x}}
\def\Nc{\mathcal{N}}
\def\Md{\mathcal{S}_{\gamma r}}
\def\Nd{\mathcal{T}_{\gamma r}}
\def\MNpencont{(\Mc,\Nc)}
\def\MNpendisc{(\Md,\Nd)}
\newcommand{\tfqs}[1]{C\left(#1 E -A\right)^{-1}B + D}
\def\eitheta{e^{\imagunit \theta}}
\def\eithetaconj{e^{-\imagunit \theta}}
\def\xiy{x{+}\imagunit y}
\def\xiyconj{x{-}\imagunit y}
\def\iy{\imagunit y}
\newcommand{\mytilde}{\raise.17ex\hbox{$\scriptstyle\mathtt{\sim}$}}
\newcommand{\matlab}{{MATLAB}}
\newcommand{\removable}[1]{{\color{Magenta}#1}}
\renewcommand{\removable}[1]{}
\title{Extended and improved criss-cross algorithms\\for computing the spectral value set\\abscissa and radius}
\author{
Peter Benner$^*$
\and 
Tim Mitchell\thanks{
Max Planck Institute for Dynamics of Complex Technical Systems, Magdeburg, 39106 Germany \href{mailto:benner@mpi-magdeburg.mpg.de}{\texttt{benner@mpi-magdeburg.mpg.de}}, \href{mailto:mitchell@mpi-magdeburg.mpg.de}{\texttt{mitchell@mpi-magdeburg.mpg.de}}.}
}
\date{September 6th, 2019}
\begin{document} 
\maketitle

\begin{abstract}
In this paper, we extend the original criss-cross algorithms for computing 
the \mbox{$\eps$-}\nobreak\hspace{0pt}pseudospectral abscissa and radius to general spectral value sets.
By proposing new root-finding-based strategies for the horizontal/radial search subphases,
we significantly reduce the number of expensive Hamiltonian eigenvalue decompositions incurred,
which typically translates to meaningful speedups in overall computation times.
Furthermore, and partly necessitated by our root-finding approach, 
we develop a new way of handling the singular pencils or problematic interior searches that can arise 
when computing the $\eps$-spectral value set radius.
Compared to would-be direct extensions of the original algorithms, that is, without our additional modifications, 
our improved criss-cross algorithms are not only noticeably faster but also more robust and numerically accurate, for both spectral value set and pseudospectral problems.
\end{abstract}

\section{Introduction}
\label{sec:intro}

Consider the continuous-time linear dynamical system 
\begin{subequations}
\label{eq:lti_cont}
\begin{align}
\label{eq:lti_ABcont}
E\dot{x} &=  Ax + Bu, \\
\label{eq:lti_CDcont}
y &=  Cx + Du,
\end{align}
\end{subequations}
where $A \in \Cmn{n}{n}$, $B \in \Cmn{n}{m}$, $C \in \Cmn{p}{n}$, $D \in \Cmn{p}{m}$, and $E \in \Cmn{n}{n}$
is assumed to be invertible.
Using output feedback $u = \Delta y$, where $\Delta \in \Cmn{m}{p}$, 
so that input $u$ varies linearly with respect to output $y$, 
\eqref{eq:lti_ABcont} can be rewritten as $E\dot{x} = Ax + B\Delta y$ and \eqref{eq:lti_CDcont} as  $y = (I - D\Delta)^{-1}Cx$,
assuming $(I - D\Delta)$ is invertible.
Thus, the input-output system \eqref{eq:lti_cont} is equivalent to
\beq
	\label{eq:EM}
	E\dot{x} = M(\Delta)x,
\eeq
where 
\beq
	\label{eq:MDelta}
	M(\Delta) \coloneqq A + B\Delta(I - D\Delta)^{-1}C
\eeq
is called the \emph{perturbed system matrix}.  
As a consequence, the dynamical properties of \eqref{eq:lti_cont},
which arise in many engineering applications,
can be studied by examining the generalized eigenvalue problem of the matrix pencil $(M(\Delta),E) = \lambda E - M(\Delta)$.  

For the special case of $B=C=E=I_n$, where $I_n$ is the $n \times n$ identity matrix, and $D=0$, \eqref{eq:EM} simply reduces to 
\beq
	\label{eq:ADelta}
	\dot x = (A + \Delta) x.
\eeq
Considering $\Delta=0$, the ordinary differential equation $\dot x = Ax$ is asymptotically stable if its 
\emph{spectral abscissa}, the maximal real part attained by the eigenvalues of matrix $A$, is strictly negative: $\alpha(A) < 0$.
However, the spectrum only provides a limited perspective with respect to the dynamics of the system.
If matrices close to an asymptotically stable matrix $A$ have eigenvalues in the right half plane, 
then the solution of $\dot x = Ax$ may still have large transient behavior before converging.  
Furthermore, in applications, where $A$ models some physical process or mechanism, 
the theoretical asymptotic stability of $A$ may not be predictive of reality, 
particularly if small perturbations of the model $A$ can result in unstable systems.
Hence, there has been great interest to also consider the dynamical properties of \eqref{eq:ADelta},
which is characterized by \emph{pseudospectra} \cite{TreE05}: the set of eigenvalues of $A$ under general perturbation, typically limited 
by placing an upper bound on the spectral norm of $\Delta$.
For a given $\eps \ge 0$, the \emph{$\eps$-pseudospectral abscissa}:
\[
	\aleps(A) \coloneqq \max \{ \Re \lambda : \lambda \in \spec(A+ \Delta), \norms{\Delta} \le \eps \},
\]
where $\spec(\cdot)$ denotes the spectrum, provides a measure of \emph{robust stability}: 
if $\aleps(A) < 0$, then $A + \Delta$ is stable for any perturbation such that $\norms{\Delta} \le \eps$.
The norm of the smallest destabilizing perturbation, i.e., the value of $\eps$ that yields $\aleps(A) = 0$,
is called the \emph{distance to instability}, introduced by \cite{Van85a}.  
Beyond robust stability measures, pseudospectra also provide information about the transient behaviors of dynamical systems [TE05, Chs. 14-19].  For example, [TTRD93] proposed pseudospectra as a tool for analyzing how laminar flows transition to turbulence, by looking not just at spectra but pseudospectra of (stable) linearizations of the nonlinear problem.

Computationally, numerous techniques for plotting the boundaries of pseudospectra are discussed in \cite{Tre99,WriT01},
while a ``criss-cross" algorithm for computing the $\eps$-pseudospectral abscissa to high precision, 
with a local quadratic rate of convergence, was proposed in \cite{BurLO03}.  
The criss-cross algorithm performs a sequence of alternating vertical and horizontal searches
to find relevant boundary points of the $\eps$-pseudospectrum along the respective search lines, 
which converge to a globally rightmost point of the $\eps$-pseudospectrum; 
these vertical and horizontal searches are accomplished by computing eigenvalues of associated Hamiltonian matrices or matrix pencils.  
In fact, the techniques used in the criss-cross algorithm build upon those developed 
for the first algorithm for computing the distance to instability \cite{Bye88}.
Relevant for discrete-time systems $x_{k+1} = Ax_k$, the criss-cross algorithm has also been adapted to compute the corresponding \emph{$\eps$-pseudospectral radius}:
\[
	\rhoeps(A) \coloneqq \max \{ | \lambda | : \lambda \in \spec(A+ \Delta), \norms{\Delta} \le \eps \},
\]
by using circular and radial searches instead of vertical and horizontal ones \cite{MenO05}.
Of course, when $\eps=0$, $\rhoeps(A) = \rho(A)$, the \emph{spectral radius} of $A$.

For the more general setting of \eqref{eq:lti_cont}, 
the analogue of the $\eps$-pseudospectrum is an \emph{$\eps$-spectral value set}
while the analogue of the distance to instability is the \emph{complex stability radius} 
(perhaps better known by its reciprocal value, the $\Hinf$ norm).
Spectral value sets are distinctly different from pseudospectra of generalized eigenvalue problems $\lambda E - A$,
where both $A$ and $E$ could be considered under general perturbation.
In spectral value sets, \eqref{eq:lti_cont} only permits \emph{structured} perturbations of the form $B\Delta(I - D\Delta)^{-1}C$
to operator $A$, while $E$ remains unperturbed.
Fixed matrices $A$, $B$, $C$, $D$, and $E$ represent the certainties of the model 
while $\Delta$ represents the uncertainties in the feedback loop.  
To identify dynamical properties of \eqref{eq:lti_cont},
it is natural to consider the worst outcome possible over the set of uncertainties. 
The complex stability radius encodes precisely that: the norm of the smallest matrix $\Delta$ 
such that $B\Delta(I - D\Delta)^{-1}C$ destabilizes \eqref{eq:lti_cont}, assuming for now that $(A,E)$ is stable itself.

Algorithms for computing the complex stability radius (or the $\Hinf$ norm) 
of general systems with input and output \eqref{eq:lti_cont} 
also generally rely on extensions of the level set techniques developed by \cite{Bye88} for 
computing the distance to instability.
Like the pseudospectral abscissa and radius algorithms, 
these too require $\bigO(n^3)$ amount of work and $\bigO(n^2)$ memory per iteration 
so there has been much recent interest in developing alternative scalable approximation techniques.
Spectral value sets have been a useful tool in this endeavor
(see \cite{GugGO13,BenV14,MitO16}), 
even though exact methods have not made use of them (at least not directly).
The key component has been the introduction of efficient iterations 
for approximating the \emph{$\eps$-spectral value set abscissa}, which was first done
for approximating the $\eps$-pseudospectral abscissa (and radius) in \cite{GugO11}.

In this work, we extend the pseudospectral methods of \cite{BurLO03,MenO05}
to computing the spectral value set and radius,
thus providing dense and exact analogues to the above scalable approximation techniques.
We also propose significant modifications and improvements to these methods.
The core idea is one we simultaneously exploited in our work to 
accelerate the computation of the $\Hinf$ norm \cite{BenM18a}:
replace large Hamiltonian eigenvalue computations  
with much cheaper evaluations of the norm of the transfer function wherever possible.
However, while \cite{BenM18a} uses a rather straightforward 
application of smooth optimization techniques to take larger (and thus fewer) 
steps before converging to the $\Hinf$ norm,
our work here involves several important differences and additional complexities.
First, we replace the globally-optimal horizontal/radial searches in the original 
algorithms with much cheaper but possibly only locally-optimal root-finding-based searches 
(using the norm of the transfer function); 
consequently, our new algorithms could conceivably incur more iterations than the original methods,
even though they are often significantly faster overall.
Second, our new approach also affords a new strategy to intelligently order the horizontal/radial searches so that relatively few are actually solved per iteration and those that are solved are all warm started by increasingly better initializations.
Third, as the original pseudospectral radius algorithm requires globally-optimal radial searches
to ensure it does not stagnate, 
we additionally propose a new technique for overcoming the problematic singular pencils and interior searches
that may arise, one that is both compatible with our new locally-optimal radial searches and
that should also be more robust in practice.
While our modifications only affect the constant factors in terms of efficiency,
the resulting speedups are nevertheless typically meaningful.
For example, in robust control applications, 
the spectral value set (or pseudospectral) abscissa/radius can appear 
as part of a nonsmooth optimization design task and will thus be typically
evaluated thousands or even millions of times during optimization.
Finally, by no longer computing purely imaginary eigenvalues of Hamiltonian eigenvalue problems for the horizontal/radial searches,
our new methods also avoid the accompanying rounding errors of such computations;
as a result, our improved methods are more reliable and accurate in practice.

The paper is organized as follows. 
Prerequisite definitions and theory are given in \S\ref{sec:svs}.
In \S\ref{sec:abs_old}, we directly extend the pseudospectral abscissa algorithm of \cite{BurLO03}
to the spectral value set abscissa and then present our corresponding improved method in \S\ref{sec:abs_new}.  We respectively do the same for the pseudospectral radius algorithm of \cite{MenO05} and the spectral value set radius in \S\ref{sec:rad_old} and \S\ref{sec:rad_new}, the latter of which includes our new way of handling singular pencils and interior searches.  Convergence results are given in \S\ref{sec:convergence}, while implementation 
details and numerical experiments are respectively provided in \S\ref{sec:code} and \S\ref{sec:numerical}.  
Concluding remarks are made in \S\ref{sec:wrapup}.

\section{Spectral value sets and the transfer function}
\label{sec:svs}
The following general concepts are used throughout the paper.

\begin{defi}
Given a nonempty closed set $\mathcal{D} \subset \C$, a point $\lambda \in \mathcal{D}$ is:
\begin{enumerate}
\item \emph{rightmost} if $\Re \lambda = \max \{ \Re z : z \in \mathcal{D} \}$
\item \emph{outermost} if $| \lambda | = \max \{ | z | : z \in \mathcal{D} \}$
\item \emph{isolated} if $\mathcal{D} \cap \mathcal{N} = \lambda$ for some neighborhood $\mathcal{N}$ of $\lambda$
\item \emph{interior} or \emph{strictly inside} if $\mathcal{N} \subset \mathcal{D}$ for some neighborhood $\mathcal{N}$ of $\lambda$.
\end{enumerate}
Furthermore, $\lambda$ is a \emph{locally rightmost or outermost} point of $\mathcal{D}$ 
if $\lambda$ is respectively a rightmost or outermost point
of $\mathcal{D} \cap \mathcal{N}$, for some neighborhood $\mathcal{N}$ of $\lambda$.
\end{defi}

\begin{defi}
\label{def:svs}
Let $\eps \ge 0$ be such that $\eps \|D\|_2 < 1$ and define the \emph{$\eps$-spectral value set}
\beq
	\SVSeps(A,B,C,D,E) = \bigcup \, \{ \spec(M(\Delta),E) : \Delta \in \Cmn{m}{p}, \norms{\Delta} \le \eps \}.
\eeq
\end{defi}

\begin{rema}
Note that we assume that $E$ is invertible, here and throughout the paper.  
If $E$ is singular but $(A,E)$ is still index 1,
then the system can be transformed into an equivalent representation without a singular $E$ matrix;
see \cite{morFreRM08} for details.  
\end{rema}

Now consider the \emph{transfer function} associated with input-output system \eqref{eq:lti_cont}:
\beq
	\label{eq:tf}
	G(\lambda) \coloneqq C(\lambda E - A)^{-1} + D \quad \text{for} \quad \lambda \in \C \backslash \spec(A,E). 
\eeq

As shown in \cite[\S5.2]{HinP05} for $E=I$, 
spectral value sets can be equivalently defined in terms of the norm of the transfer function,
instead of eigenvalues of $(M(\Delta),E)$.
This fundamental result easily extends to the case of generic $E$ matrices we consider here; 
e.g. the proof of \cite[Theorem 2.1]{GugGO13} readily generalizes by substituting 
all occurrences of $(\lambda I - A)$ with $(\lambda E - A)$.

\begin{theo}
\label{thm:eig_ntf}
Let $\eps \ge 0$ be such that $\eps \|D\|_2 < 1$ and $\| \Delta \|_2 \le \eps$ so that $I - D\Delta$ is invertible.
Then for $\lambda \not\in \spec(A,E)$ the following are equivalent:
\beq
	\|G(\lambda)\|_2 \ge \eps^{-1} \quad \text{and} \quad \lambda \in \spec(M(\Delta),E) \text{ for some } \Delta \text{ with } \|\Delta \|_2 \le \eps.
\eeq
\end{theo}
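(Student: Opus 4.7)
The plan is to reduce the eigenvalue condition on the perturbed pencil $(M(\Delta),E)$ to a simple eigenvalue condition on the product $G(\lambda)\Delta$, and then invoke the SVD of $G(\lambda)$ to construct an optimal rank-one perturbation in one direction while using submultiplicativity in the other.

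\emph{Setup and reduction.} Since $\lambda \notin \spec(A,E)$, $(\lambda E - A)$ is invertible, and we can factor
\[
  \lambda E - M(\Delta) = (\lambda E - A)\bigl[I - (\lambda E-A)^{-1}B\Delta(I-D\Delta)^{-1}C\bigr].
\]
Hence $\lambda \in \spec(M(\Delta),E)$ iff there is some $x \neq 0$ with $x = (\lambda E-A)^{-1}B\Delta(I-D\Delta)^{-1}Cx$. Setting $y \coloneqq (I-D\Delta)^{-1}Cx$, multiplying $x = (\lambda E-A)^{-1}B\Delta y$ on the left by $C$ and adding $D\Delta y$ to both sides yields $Cx + D\Delta y = [C(\lambda E-A)^{-1}B + D]\Delta y = G(\lambda)\Delta y$, while by construction $Cx + D\Delta y = (I - D\Delta)y + D\Delta y = y$. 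Thus the condition becomes $y = G(\lambda)\Delta y$ with $y\neq 0$ (if $y=0$, then $x = (\lambda E-A)^{-1}B\Delta\cdot 0 = 0$, contradicting $x\neq 0$). In summary, for $\lambda \notin \spec(A,E)$ and any $\Delta$ with $\norms{\Delta}\leq \eps$ (so that $I - D\Delta$ is invertible by $\eps\|D\|_2 < 1$), we have
\[
  \lambda \in \spec(M(\Delta),E) \;\Longleftrightarrow\; 1 \in \spec\bigl(G(\lambda)\Delta\bigr).
\]

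\emph{One direction.} Suppose $\Delta$ with $\norms{\Delta}\leq\eps$ satisfies $1 \in \spec(G(\lambda)\Delta)$. Then $1 \leq \rho(G(\lambda)\Delta) \leq \norms{G(\lambda)\Delta} \leq \norms{G(\lambda)}\,\norms{\Delta} \leq \eps\,\norms{G(\lambda)}$, so $\norms{G(\lambda)} \geq \eps^{-1}$.

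\emph{Other direction.} Suppose $\norms{G(\lambda)} \geq \eps^{-1}$, and let $\sigma_1 = \norms{G(\lambda)}$ with corresponding leading left and right singular vectors $\un_1, \vn_1$, so $G(\lambda)\vn_1 = \sigma_1 \un_1$. Define the rank-one perturbation $\Delta \coloneqq \sigma_1^{-1}\vn_1 \un_1^*$. Then $\norms{\Delta} = \sigma_1^{-1} \leq \eps$, and $G(\lambda)\Delta\,\un_1 = \sigma_1^{-1}G(\lambda)\vn_1 \un_1^*\un_1 = \un_1$, so $1$ is an eigenvalue of $G(\lambda)\Delta$. Invertibility of $I - D\Delta$ is guaranteed by $\norms{D\Delta} \leq \eps\norms{D} < 1$, so the reduction above applies and gives $\lambda \in \spec(M(\Delta),E)$.

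The only nontrivial step is the algebraic reduction in the first paragraph; once it is in hand, both implications are immediate from standard norm bounds and the SVD. The hypothesis $\eps\|D\|_2 < 1$ plays the essential role of keeping $I - D\Delta$ invertible for every admissible $\Delta$, so that the manipulations involving $(I-D\Delta)^{-1}$ and the definition of $M(\Delta)$ remain valid throughout.
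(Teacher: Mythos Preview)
Your proof is correct and follows essentially the standard route that the paper defers to: the paper does not prove this theorem itself but points to \cite[Theorem~2.1]{GugGO13} (and \cite[\S5.2]{HinP05}), noting that the argument generalizes verbatim upon replacing $\lambda I - A$ by $\lambda E - A$. That argument is precisely the reduction to $1 \in \spec(G(\lambda)\Delta)$ combined with the SVD-based rank-one construction you carry out.

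One small remark on completeness: you state the equivalence $\lambda \in \spec(M(\Delta),E) \Longleftrightarrow 1 \in \spec(G(\lambda)\Delta)$ but only explicitly verify the forward implication. You then use the reverse implication in the ``other direction'' paragraph, so it should be checked. It is, of course, equally elementary: given $y \neq 0$ with $G(\lambda)\Delta y = y$, set $x \coloneqq (\lambda E - A)^{-1}B\Delta y$; then $Cx = (I - D\Delta)y$ gives $y = (I-D\Delta)^{-1}Cx$, whence $x = (\lambda E - A)^{-1}B\Delta(I-D\Delta)^{-1}Cx$, and $x \neq 0$ since $x = 0$ would force $(I-D\Delta)y = Cx = 0$ and thus $y = 0$. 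Alternatively, one can avoid the eigenvector chase altogether by factoring $I - G(\lambda)\Delta = (I - D\Delta)\bigl[I - (I-D\Delta)^{-1}C(\lambda E - A)^{-1}B\Delta\bigr]$ and invoking the Sylvester determinant identity to match it with $\det\bigl(I - (\lambda E-A)^{-1}B\Delta(I-D\Delta)^{-1}C\bigr)$.
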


By Theorem~\ref{thm:eig_ntf}, the following corollary is immediate, 
providing an alternate spectral value set definition based on the norm of the transfer function.

\begin{coro}
\label{cor:svs_tf}
Let $\eps \ge 0$ be such that  $\eps \|D\|_2 < 1$.  Then 
\beq
	\SVSeps\abcde  = \spec(A,E) \, \bigcup \, \{ \lambda \in \C\backslash \spec(A,E) : \|G(\lambda)\|_2 \ge \eps^{-1}\}.
\eeq
\end{coro}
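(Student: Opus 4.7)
The plan is to verify both inclusions after partitioning the spectral value set into two disjoint pieces according to whether a given $\lambda$ lies in $\spec(A,E)$ or not. By Definition~\ref{def:svs}, $\SVSeps\abcde$ is exactly the union, over admissible perturbations $\Delta$, of $\spec(M(\Delta),E)$, so it is natural to first peel off the unperturbed spectrum $\spec(A,E)$ and then appeal to Theorem~\ref{thm:eig_ntf} on the complement. The structure of the proof is therefore: (i) show $\spec(A,E) \subseteq \SVSeps\abcde$ trivially; (ii) reduce the remaining claim to Theorem~\ref{thm:eig_ntf}.

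For step (i), I would simply observe that $\Delta = 0$ satisfies $\norms{\Delta} \le \eps$ (since $\eps \ge 0$) and that $M(0) = A$ by \eqref{eq:MDelta}. Hence $\spec(A,E) = \spec(M(0),E) \subseteq \SVSeps\abcde$, so $\spec(A,E)$ always lies in the spectral value set regardless of whether its elements satisfy the norm condition on $G$.

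For step (ii), consider any $\lambda \in \C \setminus \spec(A,E)$. The membership $\lambda \in \SVSeps\abcde$ is, by Definition~\ref{def:svs}, equivalent to the existence of some $\Delta$ with $\norms{\Delta} \le \eps$ and $\lambda \in \spec(M(\Delta),E)$; note that the assumption $\eps \norms{D} < 1$ guarantees $I - D\Delta$ is invertible so that $M(\Delta)$ is well defined. Theorem~\ref{thm:eig_ntf} states precisely that this is equivalent to $\norms{G(\lambda)} \ge \eps^{-1}$. Combining the two characterizations gives the claimed set equality.

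There is no real obstacle here, since the corollary is a direct repackaging of Theorem~\ref{thm:eig_ntf}; the only mild subtlety is the edge case $\eps = 0$, where $\eps^{-1}$ should be read as $+\infty$ and the condition $\norms{G(\lambda)} \ge \eps^{-1}$ is vacuous, so the right-hand side collapses to $\spec(A,E)$, consistent with $\SVSeps\abcde = \spec(A,E)$ when no perturbation is allowed.
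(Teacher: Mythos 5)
Your proof is correct and matches the paper's approach: the paper simply states that the corollary is immediate from Theorem~\ref{thm:eig_ntf}, and your two-step decomposition (including $\spec(A,E)$ via $\Delta=0$, then invoking the theorem on the complement) is exactly the argument being gestured at. Your extra remark on the $\eps=0$ edge case is sound and a useful sanity check, though not required.
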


Note that the nonstrict inequalities in Definition~\ref{def:svs} and Theorem~\ref{thm:eig_ntf} imply
that the spectral value sets we consider are compact.
Furthermore, the boundary of $\SVSeps\abcde$ is characterized by the condition $\|G(\lambda)\|_2 = \eps^{-1}$
while for any matrix $\Delta$ such that $\lambda \in \spec(M(\Delta),E)$ is a boundary point,
 $\|\Delta\|_2=\eps$ must hold (though the reverse implication is not true).

\begin{lemm}
\label{lem:path}
Let $\eps > 0$ be such that $\eps\|D\|_2 < 1$ and 
let $\lambda$ be a non-isolated boundary point of an $\eps$-spectral value set,
with associated perturbation matrix $\Delta$, that is, $\lambda \in \spec(M(\Delta),E)$.
Then for one or more $\lambda_0 \in \spec(A,E)$, there exists a continuous path parameterized by $t \in [0,1]$
such that $\lambda(t)$ is an eigenvalue of $\spec(M(t\Delta),E)$ 
taking $\lambda(0) = \lambda_0$ to $\lambda(1) = \lambda$.
Furthermore, $\lambda(t)$ is only a boundary point at $t=1$.
\end{lemm}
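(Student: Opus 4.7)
The plan is to scale the perturbation as $t\Delta$ for $t \in [0,1]$ and invoke continuous dependence of eigenvalues of the matrix pencil $(M(t\Delta),E)$ on $t$. First I would verify that $M(t\Delta) = A + tB\Delta(I - tD\Delta)^{-1}C$ is well-defined and depends analytically on $t$ throughout $[0,1]$: since $\|t\Delta\|_2 \le \eps$ and $\eps \|D\|_2 < 1$, the matrix $I - tD\Delta$ is invertible for every such $t$. At $t = 0$ we have $M(0) = A$, so $\spec(M(0),E) = \spec(A,E)$, while at $t = 1$ we have $\lambda \in \spec(M(\Delta),E)$ by hypothesis.

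Next, applying the standard continuity theorem for eigenvalues of a continuously varying regular matrix pencil produces a continuous selection $t \mapsto \lambda(t) \in \spec(M(t\Delta),E)$ with $\lambda(1) = \lambda$; tracing this path back to $t = 0$ yields some $\lambda_0 := \lambda(0) \in \spec(A,E)$. If $\lambda$ is a multiple eigenvalue of $(M(\Delta),E)$, different choices of branch at $t = 1$ can produce paths ending at different eigenvalues of $(A,E)$, which accommodates the ``one or more $\lambda_0$'' phrasing of the lemma.

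To verify the final claim that $\lambda(t)$ is a boundary point only at $t = 1$, I would split into two cases. If $\lambda(t) \notin \spec(A,E)$, then Theorem~\ref{thm:eig_ntf} applied to the perturbation $t\Delta$ (which has norm at most $t\eps$) gives $\|G(\lambda(t))\|_2 \ge (t\eps)^{-1} > \eps^{-1}$ for $t \in (0,1)$. By continuity of $\|G\|_2$ on $\C \setminus \spec(A,E)$ and Corollary~\ref{cor:svs_tf}, a whole open neighborhood of $\lambda(t)$ lies in $\SVSeps$, so $\lambda(t)$ is interior. If instead $\lambda(t) \in \spec(A,E)$ (this includes $t = 0$), then $\|G\|_2$ diverges as its argument approaches $\lambda(t)$, so a punctured neighborhood of $\lambda(t)$ is contained in $\SVSeps$; together with $\lambda(t) \in \SVSeps$ itself, this again gives interior.

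The main obstacle will be the continuous selection of eigenvalues when multiplicities change or branches cross along the path. This is standard but technical, most cleanly obtained by viewing the $n$ eigenvalues as an unordered multiset of roots of $\det(\lambda E - M(t\Delta))$, whose coefficients depend continuously on $t$, and then invoking continuous dependence of polynomial roots on coefficients before pairing branches consistently to extract $\lambda(t)$. The non-isolated hypothesis on $\lambda$ is used only lightly here, essentially to rule out degenerate endpoint geometry; additional care would be needed in pathological cases where $\|G\|_2$ fails to blow up at some $\lambda(t) \in \spec(A,E)$ due to uncontrollable or unobservable modes, but under the standing assumptions such situations do not arise on the boundary of a generic spectral value set.
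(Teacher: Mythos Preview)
Your proposal is correct and follows essentially the same approach as the paper: scale the perturbation to $t\Delta$, invoke continuity of eigenvalues to obtain the path, and use $\|t\Delta\|_2 < \eps$ for $t\in[0,1)$ to rule out boundary points along the way. The paper's version is terser---it simply cites the necessary condition (stated just before the lemma) that any $\Delta$ producing a boundary point must have $\|\Delta\|_2=\eps$---whereas you unpack this via Theorem~\ref{thm:eig_ntf} and Corollary~\ref{cor:svs_tf} to get $\|G(\lambda(t))\|_2>\eps^{-1}$ directly; these are equivalent, and your additional care about well-definedness of $M(t\Delta)$ and the eigenvalue-selection subtlety is welcome detail the paper omits.
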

\begin{proof}
By continuity of eigenvalues, the continuous path $\lambda(t)$ exists
and clearly, $\|t\Delta\|_2 < \|\Delta\|_2$ holds for $t \in [0,1)$.
As $\lambda$ is on the boundary, $\|\Delta\|_2 = \eps$ holds
but then the necessary condition for 
$\lambda(t)$ to be a boundary point is violated for all $t \in [0,1)$.
\end{proof}
 
\subsection{The spectral value set abscissa and radius}
\label{sec:svs_absrad}
The $\eps$-spectral value set abscissa, relevant for continuous-time systems \eqref{eq:lti_cont}, is formally defined as follows.

\begin{defi}
Let $\eps \ge 0$ be such that $\eps \|D\|_2 < 1$ and define the \emph{$\eps$-spectral value set abscissa}
\beq
	\aleps(A,B,C,D,E) \coloneqq \max \{ \Re \lambda : \lambda \in \SVSeps(A,B,C,D,E) \}.
\eeq
\end{defi}

Now consider the discrete-time linear dynamical system 
\begin{subequations}
\label{eq:lti_disc}
\begin{align}
Ex_{k+1} &=  Ax_k + Bu_k \\
y_k & =  Cx_k + Du_k,
\end{align}
\end{subequations}
where the matrices are defined as before in \eqref{eq:lti_cont}. 
For the case when $B=C=E=I_n$, and $D=0$, 
the simple ordinary difference equation $x_{k+1} = Ax_k$ is asymptotically stable 
if and only if its \emph{spectral radius},
the maximal modulus attained by the eigenvalues of $A$, 
is strictly less than one: $\rho(A) < 1$.
Thus, for discrete-time input-output systems of the form of \eqref{eq:lti_disc},
we generalize the $\eps$-pseudospectral radius as follows.

\begin{defi}
Let $\eps \ge 0$ be such that $\eps \|D\|_2 < 1$ and define the \emph{$\eps$-spectral value set radius}
\beq
	\rhoeps(A,B,C,D,E) \coloneqq \max \{ | \lambda | : \lambda \in \SVSeps(A,B,C,D,E) \}.
\eeq
\end{defi}


However, for input-output systems,
there is an additional wrinkle when defining the $\eps$-spectral value set abscissa and radius:
eigenvalues may not be of interest if they are not \emph{controllable} and \emph{observable}, 
concepts which we now define.

\begin{defi}
Let $\lambda$ be an eigenvalue of the matrix pencil $(A,E)$ from an input-output system.
Eigenvalue $\lambda$ is \emph{observable} if $Cx \ne 0$ holds for all of its right eigenvectors $x$,
i.e. $Ax = \lambda E x, x \ne 0$.
Eigenvalue $\lambda$ is \emph{controllable} if $B^*y \ne 0$ holds for all of its left eigenvectors $y$,
i.e. $y^*A = \lambda y^* E, y \ne 0$.
\end{defi}

In a sense, the presence of uncontrollable and/or unobservable eigenvalues can be considered an artifact 
of redundancy in a specific system design.
Any associated transfer function $G(\lambda)$ of \eqref{eq:lti_cont} with uncontrollable or unobservable eigenvalues
can be reduced to what is called a \emph{minimal realization} $\widehat G(\lambda)$, whose eigenvalues
are all controllable and observable; 
e.g. see \cite[Theorem 2-6.3]{Dai89}.
The $A$ and $E$ matrices of $\widehat G(\lambda)$ are of minimal possible dimension 
so that the reduced transfer function is unaltered and its input-output behavior remains identical to $G(\lambda)$.

In terms of spectral value sets, consider an eigenvalue $\lambda$ of $(A,E)$ 
with right and left eigenvectors $x$ and $y$.
If $\lambda$ is unobservable or uncontrollable, then $Cx = 0$ or $B^*y = 0$ respectively holds,
and thus for any perturbation matrix $\Delta \in \Cmn{m}{p}$,
either $M(\Delta) x = Ax$ or $y^*M(\Delta) = y^* A$ holds.
Furthermore, if $\lambda$ is a simple eigenvalue, 
then for sufficiently small $\eps > 0$, 
$\lambda$ must be an isolated point of $\SVSeps(A,B,C,D,E)$:
letting $\lambda(t)$ be some parameterization of $\lambda$ with $t \in \R$ and $\lambda(0) = \lambda$,
via standard perturbation theory for simple eigenvalues, 
it is easily seen that $\lambda^\prime(0) = 0$ holds.

Since the presence of uncontrollable/unobservable eigenvalues will only affect
the point in the complex plane used to initialize the algorithms presented here
(and such eigenvalues can be removed as a preprocessing step),
for the remainder of the paper
we simply assume whether or not they are included is determined by the user.


\subsection{Derivatives of the norm of the transfer function}
\label{sec:ntf_derivs}
As we will utilize first- and second-order information of 
$\|G(\lambda)\|_2$ in different ways,
we will need the following results.
For technical reasons, we will first need the following assumption.

\begin{assumption}
Let $\eps > 0$ with $\eps \|D\|_2 < 1$ and let $\lambda \in \SVSeps(A,B,C,D,E)$ with $\lambda \not\in \spec(A,E)$.
Then the largest singular value of $G(\lambda)$ is simple.
\end{assumption}

\begin{rema}
For almost all quintuplets $(A,B,C,D,E)$,  
the largest singular value of $G(\lambda)$ is indeed simple for all $\lambda \in \C\backslash\spec(A,E)$; 
e.g. see \cite[\S2]{BurLO03} for pseudospectra and \cite[Remark 2.20]{GugGO13} for general spectral value sets with $E=I$.  
Although counter examples can be constructed (see \cite[Remark 2.20]{GugGO13}),
with probability one such examples will not be encountered in practice and as such,
this technicality does not pose a problem for the algorithms we propose here.
\end{rema}

Let $\lambda(t) \in \C$ be parameterized with respect to $t \in \R$ and $Z(t) = \lambda(t)E - A$.
Then 
\beq
	\label{eq:GofLambda}
	(G \circ \lambda)(t) = C(\lambda(t)E - A)^{-1}B + D = C Z(t)^{-1} B + D.
\eeq
By standard (matrix) differentiation techniques, we have that:
\begin{subequations}
\label{eq:GofLambda_derivs}
\begin{align}
	\label{eq:GofLambda_deriv1}
	(G \circ \lambda)^\prime(t) {}& \coloneqq -\lambda^\prime(t) C Z(t)^{-1} E Z(t)^{-1} B \\
	\label{eq:GofLambda_deriv2}
	(G \circ \lambda)^{\prime\prime}(t) {}& \coloneqq
	2 \lambda^\prime(t)^2 C Z(t)^{-1} E Z(t)^{-1} E Z(t)^{-1} B - \lambda^{\prime\prime}(t) C Z(t)^{-1} E Z(t)^{-1} B.
\end{align}
\end{subequations}
Furthermore, let $s(t)$ be the largest singular value of \eqref{eq:GofLambda}, i.e. $\|G(\lambda(t))\|_2$, 
with associated left and right singular vectors $u(t)$ and $v(t)$.
Assuming that $s(t)$ is a simple singular value at say, $t=0$,
by standard perturbation theory, it follows that
\begin{subequations}
\label{eq:sigma_deriv1}
\begin{align}
	s^\prime(0) 
		&= \Real{ u(0)^* \left[ (G \circ \lambda)^\prime(0) \right] v(0) } \\
		&= - \Real{ u(0)^* \left[ \lambda^\prime(0) CZ(0)^{-1} E Z(0)^{-1} B \right] v(0) }.
\end{align}
\end{subequations}
To compute $s^{\prime\prime}(0)$, we need the following result for the second derivative of eigenvalues,
which can be found in various forms, e.g. \cite{Lan64,OveW95}.
\begin{theo}
\label{thm:eig2ndderiv}
For $t \in \R$, let $H(t)$ be a twice-differentiable $n \times n$ Hermitian matrix family with distinct eigenvalues at $t=0$ with $(\lambda_k,x_k)$ denoting the $k$th such eigenpair and where each eigenvector $x_k$ has unit norm
and the eigenvalues are ordered $\lambda_1 \geq \ldots \geq \lambda_n$.\footnote{
In \cite[Remark~4.2]{BenM18a}, we were overly cautious in assuming that all the singular values of $G(\lambda(t))$ at $t=0$ are simple; only simplicity of the largest singular value is needed.
}  
Then:
\[
	\lambda_1''(t) \bigg|_{t=0}= x_1^* H''(0) x_1 + 2 \sum_{k = 2}^{n} \frac{| x_1^* H'(0) x_k |^2}{\lambda_1 - \lambda_k}.
\]
\end{theo}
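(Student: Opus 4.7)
The plan is to reduce everything to twice differentiating the Rayleigh quotient along a smooth branch of eigenpairs. Since $\lambda_1$ is a simple eigenvalue of $H(0)$ (indeed all eigenvalues are distinct at $t=0$), a standard application of the implicit function theorem to the characteristic equation yields a twice-differentiable branch $\lambda_1(t)$ near $t=0$, together with a corresponding unit eigenvector $x_1(t)$ whose phase can be chosen so that $x_1(t)$ is itself twice differentiable, with $x_1(0) = x_1$.

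First, I would derive the first-order formula $\lambda_1'(t) = x_1(t)^* H'(t) x_1(t)$. Differentiating $H(t) x_1(t) = \lambda_1(t) x_1(t)$ once and premultiplying by $x_1(t)^*$, the normalization $x_1(t)^* x_1(t) = 1$ (which implies $x_1(t)^* x_1'(t) + x_1'(t)^* x_1(t) = 0$) together with $H(t)$ being Hermitian causes the terms involving $x_1'(t)$ to cancel. Differentiating this once more and evaluating at $t=0$ yields
\[
    \lambda_1''(0) = x_1^* H''(0) x_1 + 2\Real{x_1^* H'(0) x_1'(0)},
\]
so the entire task reduces to computing the second term.

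Next, I would expand $x_1'(0) = \sum_{k=1}^{n} c_k x_k$ in the orthonormal eigenbasis of $H(0)$, with coefficients $c_k = x_k^* x_1'(0)$. The $k=1$ contribution vanishes because $x_1^* H'(0) x_1 = \lambda_1'(0) \in \R$ while the normalization condition forces $\Real{c_1} = 0$. For $k \geq 2$, I would determine $c_k$ by differentiating $(H(t) - \lambda_1(t) I) x_1(t) = 0$ once and taking the inner product with $x_k$; using $x_k^* H(0) = \lambda_k x_k^*$ and $x_k^* x_1 = 0$, this yields $c_k = x_k^* H'(0) x_1 / (\lambda_1 - \lambda_k)$.

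Substituting back and using the Hermiticity of $H'(0)$, i.e.\ $x_k^* H'(0) x_1 = \overline{x_1^* H'(0) x_k}$, makes each summand in $2\Real{x_1^* H'(0) x_1'(0)}$ the real quantity $2|x_1^* H'(0) x_k|^2/(\lambda_1 - \lambda_k)$, producing the claimed formula. The only delicate step is establishing a twice-differentiable branch $x_1(t)$ at the outset, which hinges on $\lambda_1(0)$ being simple; once smoothness and the normalization-induced constraint $\Real{c_1} = 0$ are in hand, the remainder is routine perturbation algebra.
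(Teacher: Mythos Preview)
Your proof is correct and follows the standard Rayleigh--Schr\"odinger perturbation argument. Note, however, that the paper does not actually prove this theorem: it is stated as a known result with references to \cite{Lan64,OveW95}, so there is no ``paper's own proof'' to compare against. Your derivation---differentiating the eigenvalue equation, expanding $x_1'(0)$ in the eigenbasis of $H(0)$, and reading off the coefficients $c_k$ for $k\ge 2$ from the first-order equation---is precisely the classical approach found in those references, and the handling of the $k=1$ term via $\Re c_1 = 0$ is the right way to dispose of it.
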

Since $s(t)$ is the largest singular value of $G(\lambda(t))$, it is also 
the largest eigenvalue of the matrix:
\beq
	\label{eq:eigderiv_mat}
	\eigderivmat(t) = 
	\begin{bmatrix} 0 & G(\lambda(t)) \\ 
				G(\lambda(t))^* & 0 
	\end{bmatrix},
\eeq
which has first and second derivatives
\beq
	\begingroup
	\setlength\arraycolsep{3pt}
	\label{eq:eigderiv12_mat}
	\eigderivmat^\prime(t) = 
	\begin{bmatrix} 0 & (G \circ \lambda)^\prime(t) \\ 
				(G \circ \lambda)^\prime(t)^* & 0
	\end{bmatrix}
	~~~\text{and}~~~
	\eigderivmat^{\prime\prime}(t) = 
	\begin{bmatrix} 0 & (G \circ \lambda)^{\prime\prime}(t) \\ 
				(G \circ \lambda)^{\prime\prime}(t)^* & 0
	\end{bmatrix},
	\endgroup
\eeq
and where the nonzero blocks are given by \eqref{eq:GofLambda_derivs}.
Thus, by constructing matrix \eqref{eq:eigderiv_mat} and its first and second derivatives given in \eqref{eq:eigderiv12_mat},
$s^{\prime\prime}(0)$ can be computed by a straightforward application of Theorem~\ref{thm:eig2ndderiv}.

Note that $s^\prime(0)$ and $s^{\prime\prime}(0)$ are relatively cheap to compute once
$s(0)$ has been.  An LU factorization needed to apply $Z(0)^{-1}$ for computing $s(0)$ 
can be saved and reused to cheaply compute the two matrices given in \eqref{eq:GofLambda_derivs}, 
noting that ignoring $\lambda^\prime(t)$, \eqref{eq:GofLambda_deriv1} appears in \eqref{eq:GofLambda_deriv2}.
Moreover, the eigenvectors $x_k$ of \eqref{eq:eigderiv_mat} can be  
obtained from the full SVD of $G(\lambda(0))$.
Let $\sigma_k$ be the $k$th singular value of $G(\lambda(0))$ with associated
right and left singular vectors $u_k$ and $v_k$, respectively.  
Then $\pm\sigma_k$ is an eigenvalue of \eqref{eq:eigderiv_mat} with  
eigenvector $\left[ \begin{smallmatrix} u_k \\ v_k \end{smallmatrix} \right]$ for $\sigma_k$ and 
eigenvector $\left[ \begin{smallmatrix} u_k \\ -v_k \end{smallmatrix}\right]$ for $-\sigma_k$.
The eigenvector for $\sigma_k = 0$ is either 
$\left[\begin{smallmatrix} u_k \\ \mathbf{0} \end{smallmatrix} \right]$
(when $p > m$) or
$\left[\begin{smallmatrix} \mathbf{0} \\ v_k \end{smallmatrix} \right]$
(when $p < m$),
where $\mathbf{0}$ denotes a column of $m$ or $p$ zeros, respectively.

When $B=C=I_n$ and $D=0$, 
the LU and backsolves can be completely avoided 
by instead equivalently computing the reciprocal of the smallest singular value of $\lambda E - A$.
Otherwise, if $G(\lambda)$ will be evaluated at more than just a handful of points, 
making LU factorizations of $\lambda E - A$ for each $\lambda \in \C$ 
can also be inefficient.  As shown in \cite{Lau81} for $E=I$,
one can first make an upper Hessenberg factorization of $A = PHP^*$, which is $\bigO(n^3)$ work but
only needs to be done once.  Then $G(\lambda)$ can be evaluated as
$\widetilde C (\lambda I - H)^{-1} \widetilde B  + D$,
where $\widetilde C = CP$ and $\widetilde B = P^*B$;
applying  $(\lambda I - H)^{-1}$ only requires $\bigO(n^2)$ work as
it remains Hessenberg for any $\lambda \in \C$.  
This Hessenberg technique also extends to when $E \neq I$ \cite{VanV85}.

\section{Directly extending the pseudospectral abscissa algorithm}
\label{sec:abs_old}
The criss-cross method of \cite{BurLO03} alternates between vertical and horizontal search phases, which we now describe and generalize to computing the spectral value set abscissa.

\subsection{Vertical search}
\label{sec:vertical_search}
The following fundamental theorem relates singular values of the transfer function,
evaluated at some point $\lambda \in \C$, 
to purely imaginary eigenvalues of an associated matrix pencil. 
A key tool for various stability measure algorithms,
 including the criss-cross method of \cite{BurLO03},
 this correspondence was first shown by \cite{Bye88} for $B=C=E=I$, $D=0$, and $x=0$ 
and has previously appeared in various less general specific extensions than what we present here.
We defer its proof, and that of the upcoming Theorem~\ref{thm:anyline_search}, to Appendix~\ref{apdx:proofs}.

\begin{theo}
\label{thm:eigsing_cont}
Let $x \in \R$, $y \in \R$, $\gamma > 0$ not a singular value of $D$, 
and $\lambda E - A$ be regular.  
Consider the matrix pencil $\MNpencont$, where
\beq
	\label{eq:MNpencil_cont}
	\begingroup
	\setlength\arraycolsep{2.5pt}
	\Mc \coloneqq 
	\begin{bmatrix} 	
	A - xE - BR^{-1}D^*C & -\gamma BR^{-1}B^* \\ 
	\gamma C^*S^{-1}C 	& -(A - xE - BR^{-1}D^*C)^* 
	\end{bmatrix},
	\ \ \ \
	\Nc \coloneqq  \begin{bmatrix} E & 0\\ 0 & E^*\end{bmatrix},
	\endgroup
\eeq
$R = D^*D - \gamma^2 I$, and $S = DD^* - \gamma^2 I$.
Then $\iy$ is an eigenvalue of $\MNpencont$ if and only if
$\gamma$ is a singular value of $G(\xiy)$ and $\xiy$ is not an eigenvalue of $(A,E)$.
\end{theo}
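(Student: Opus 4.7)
The plan is to exploit the basic equivalence that $\gamma$ is a singular value of $G(\xiy)$ if and only if there exist nonzero vectors $u \in \C^p$ and $v \in \C^m$ of equal norm with $G(\xiy) v = \gamma u$ and $G(\xiy)^* u = \gamma v$, and then translate this condition, via the state-space realization $G(\lambda) = C(\lambda E - A)^{-1}B + D$, into a generalized eigenvalue problem for the Hamiltonian-type pencil $\MNpencont$. Setting $\lambda = \xiy$, I introduce the auxiliary ``state'' vectors $w = (\lambda E - A)^{-1} B v$ and $z = (\bar\lambda E^* - A^*)^{-1} C^* u$, both well-defined under the hypothesis $\lambda \notin \spec(A,E)$. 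The singular-value relations then become the four coupled equations
\[
	(\lambda E - A)\, w = B v, \quad C w + D v = \gamma u, \quad (\bar\lambda E^* - A^*)\, z = C^* u, \quad B^* z + D^* u = \gamma v.
\]

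Next I would eliminate $u$ and $v$ algebraically. Because $\gamma$ is not a singular value of $D$, both $R = D^*D - \gamma^2 I$ and $S = D D^* - \gamma^2 I$ are invertible; combining the two output equations with the Schur-complement identity $I - D R^{-1} D^* = -\gamma^2 S^{-1}$ gives closed-form expressions for $u$ and $v$ as linear combinations of $w$ and $z$, for instance $v = -R^{-1}\bigl(D^* C w + \gamma B^* z\bigr)$. Substituting back into the two state equations, shifting $A \mapsto A - xE$ to absorb the real part of $\lambda$, and exploiting that $\mu := \iy$ is purely imaginary---so that $\bar\mu = -\mu$---the resulting vector equations collapse into exactly the pencil relation $\mu\, \Nc \begin{bmatrix} w \\ z \end{bmatrix} = \Mc \begin{bmatrix} w \\ z \end{bmatrix}$ with $\Mc$ and $\Nc$ as in \eqref{eq:MNpencil_cont}. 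This establishes the implication $(\Rightarrow)$, once one verifies that $(w,z) \ne 0$; nontriviality follows from $u,v$ being nonzero (not both can vanish since $\|u\|=\|v\|$ and the singular vectors are unit) together with invertibility of $\lambda E - A$ and $\bar\lambda E^* - A^*$.

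For the converse $(\Leftarrow)$, I would start from an eigenpair of $\MNpencont$ at $\iy$ and reverse the substitutions to define candidate $u, v$; invertibility of the shifted pencils transfers nontriviality of $(w,z)$ to nontriviality of $(u,v)$, and direct computation shows that the four singular-value relations hold. The slightly delicate point is confirming $\|u\| = \|v\|$, so that $\gamma$ is genuinely a singular value of $G(\lambda)$ rather than merely a root of $G(\lambda)v = \gamma u$ with mismatched norms; this follows from the Hamiltonian-like block structure of $\Mc$ combined with $\mu$ being purely imaginary, by taking an appropriate sesquilinear pairing of the pencil equation with $(w,z)$ and observing that the off-diagonal cross terms cancel. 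The main obstacle is simply the careful algebraic bookkeeping in the elimination step---consistently applying the identity $I - D R^{-1} D^* = -\gamma^2 S^{-1}$ and the $x$-shift---and handling the norm-equality condition in the reverse direction; no deep new idea is required beyond the standard Byers-type construction.
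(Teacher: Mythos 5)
Your proof is correct and follows essentially the same Byers-type construction as the paper: the auxiliary vectors $w,z$ are exactly the paper's $q,s$, the elimination of $u,v$ uses the same block inverse (and your identity $I - DR^{-1}D^* = -\gamma^2 S^{-1}$ is precisely what makes that inverse work), and the forward/reverse directions proceed identically. The only remark worth making is that the point you flag as delicate in the converse is actually automatic: once both relations $G(\lambda)v=\gamma u$ and $G(\lambda)^*u=\gamma v$ hold with $\gamma>0$ and $u,v$ not both zero, pairing the first with $u$ and the second with $v$ gives $\gamma\|u\|^2 = \langle G(\lambda)v,u\rangle = \langle v, G(\lambda)^*u\rangle = \gamma\|v\|^2$, so $\|u\|=\|v\|$ needs no separate sesquilinear argument with the pencil structure.
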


By setting $\gamma = \eps^{-1}$, Theorem~\ref{thm:eigsing_cont} immediately leads to the ability to compute all the boundary points, if any, of an 
$\eps$-spectral value set that lie on any desired vertical line specified by the value of $x$.
Given these boundary points, 
the subset of adjacent pairs on this vertical line which correspond to segments in the $\eps$-spectral value set can be determined in multiple ways.
While there are a few ways to do this, 
just evaluating the norm of the transfer function at their midpoints is a simple and robust choice.

\begin{rema}
\label{rem:pencil_nonsingular}
Note that the matrix pencil given by \eqref{eq:MNpencil_cont} cannot be singular.  
If it were, then $\gamma$ would be a singular value of $G(\xiy)$ for all $y \in \R$ 
and thus the entire vertical line specified by value $x$ would be a part of $\SVSeps(A,B,C,D,E)$.
Since $(A,E)$ is regular and $\eps$ is finite, this is not possible.
\end{rema}

\subsection{Horizontal search}
\label{sec:horizontal_search}
Given vertical line $x = \eta$,
let $\cs_k = [y_k,y_{k+1}]$ denote a cross section segment of $\SVSeps(A,B,C,D,E)$ on this line and
$\cs = \{\cs_1,\ldots,\cs_q\}$ denote the set of all such cross sections for $x=\eta$, 
with at least one having nonzero length.
Without loss of generality, assume that interval $\cs_k$ has nonzero length.
Since any point $\eta + \imagunit y$ with $y \in (y_k,y_{k+1})$ is strictly in the interior of $\SVSeps(A,B,C,D,E)$,
rightward progress within the spectral value set is indeed possible from vertical line $x=\eta$.

In \cite{BurLO03}, it was proposed to consider rightward progress from the midpoints 
of all the positive-length vertical cross sections $\cs_k \in \cs$, i.e., along horizontal lines given by $\psi_k = 0.5 ( y_k + y_{k+1})$.
The maximal rightward progress is then given by solving:
\beq
	\label{eq:horizontal_search}
	\max_{\cs_k \in \cs} \max \{\Re \lambda : \lambda \in \SVSeps(A,B,C,D,E) ~\text{and}~ \Im \lambda = \psi_k \}.
\eeq
To solve \eqref{eq:horizontal_search}, \cite{BurLO03} applied 
a ``rotated" version of Theorem~\ref{thm:eigsing_cont} to compute all boundary points 
along each horizontal line.  
We now present this result not only extended to spectral value sets
but also to any line in the complex plane, as this more general form will be used in \S\ref{sec:rad_old}
for computing the spectral value set radius.

\begin{defi}
Let $\theta \in [0,2\pi)$ denote the angle between the $x$-axis and some ray from the origin, 
with the positive $x$ and $y$ directions respectively given by $\theta=0$ and $\theta=\pi/2$.
Given $s\in\R$,
we define $L(\theta,s)$ as the parallel line to the left of the ray given by $\theta$,
separated by distance $s$, with left defined with respect to the direction $\theta$.
\end{defi}

\begin{theo}
\label{thm:anyline_search}
Given the line $L(\theta,s)$, 
let $\{\imagunit \omega_1,\ldots,\imagunit \omega_l\}$ be the set of purely imaginary eigenvalues of \eqref{eq:MNpencil_cont},
where $\gamma = \eps^{-1}$, $x=-s$, and matrices $A$ and $B$ have been respectively replaced by 
$e^{\imagunit \theta_\mathrm{r}} A$ and $e^{\imagunit \theta_\mathrm{r}}  B$, with $\theta_\mathrm{r} = \pi/2 - \theta$.
Then the points $\lambda_j = e^{-\imagunit \theta_\mathrm{r}}(-s + \imagunit \omega_j)$ define the cross sections
of $\SVSeps(A,B,C,D,E)$ along $L(\theta,s)$.
\end{theo}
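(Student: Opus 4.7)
The plan is to reduce Theorem~\ref{thm:anyline_search} to Theorem~\ref{thm:eigsing_cont} by rotating the complex plane so that the line $L(\theta,s)$ becomes a vertical line. Specifically, set $\theta_\mathrm{r} = \pi/2 - \theta$ and introduce the new complex coordinate $w = e^{\imagunit \theta_\mathrm{r}} \lambda$; then rotation by $\theta_\mathrm{r}$ sends the direction $\theta$ to the positive imaginary direction, and one needs only to check that the line $L(\theta,s)$ is mapped to the vertical line $\{w \in \C : \Re w = -s\}$. This last point follows from unwinding the definition of $L(\theta,s)$: the offset of $s$ to the left of a ray in direction $\theta$ becomes, after rotation by $\theta_\mathrm{r}$, an offset of $s$ to the left of the positive imaginary axis, which is precisely the vertical line $\Re w = -s$.

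Next, I would track what the rotation does to the transfer function. For $\lambda = e^{-\imagunit \theta_\mathrm{r}} w$, a brief manipulation gives
\beq
G(\lambda) \;=\; C(\lambda E - A)^{-1} B + D \;=\; C\bigl(w E - e^{\imagunit \theta_\mathrm{r}} A\bigr)^{-1} \bigl(e^{\imagunit \theta_\mathrm{r}} B\bigr) + D,
\eeq
since $e^{-\imagunit \theta_\mathrm{r}}$ can be factored out of $(e^{-\imagunit \theta_\mathrm{r}} w E - A)$ and absorbed into $B$. Thus, defining $\widetilde{A} = e^{\imagunit \theta_\mathrm{r}} A$ and $\widetilde{B} = e^{\imagunit \theta_\mathrm{r}} B$ yields a transformed transfer function $\widetilde{G}(w) = C(wE - \widetilde A)^{-1} \widetilde B + D$ which satisfies $\widetilde{G}(w) = G(e^{-\imagunit \theta_\mathrm{r}} w)$. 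In particular, the singular values (hence the spectral norm) of $\widetilde G$ at $w$ agree with those of $G$ at the unrotated point, so that $\SVSeps(\widetilde A, B, C, D, E) = e^{\imagunit \theta_\mathrm{r}}\,\SVSeps(A,B,C,D,E)$, and regularity of $\lambda E - A$ is preserved under the rotation.

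With this setup, applying Theorem~\ref{thm:eigsing_cont} to the transformed quintuple $(\widetilde A, \widetilde B, C, D, E)$ with $x = -s$ and $\gamma = \eps^{-1}$ yields: the purely imaginary eigenvalues $\imagunit \omega_j$ of the pencil $\MNpencont$ formed from these matrices are precisely the values for which $\eps^{-1}$ is a singular value of $\widetilde G(-s + \imagunit \omega_j)$, equivalently of $G(e^{-\imagunit \theta_\mathrm{r}}(-s + \imagunit \omega_j))$. By Corollary~\ref{cor:svs_tf}, these unrotated points $\lambda_j = e^{-\imagunit \theta_\mathrm{r}}(-s + \imagunit \omega_j)$ are the candidate boundary points of $\SVSeps(A,B,C,D,E)$ on $L(\theta,s)$, and the cross sections along $L(\theta,s)$ are then the segments bounded by consecutive $\lambda_j$ (to be filtered afterwards by, say, a norm-of-transfer-function test at midpoints, as in \S\ref{sec:vertical_search}).

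The main obstacle is really just careful bookkeeping of the rotation: ensuring that the convention ``line to the left of the ray'' translates cleanly into the shift $x = -s$ rather than $x = +s$, and that after factoring $e^{-\imagunit \theta_\mathrm{r}}$ out of $(\lambda E - A)$ the extra phase is absorbed by $B$ rather than by $C$. Once the rotation is verified to map $L(\theta,s)$ to the correct vertical line with the correct sign of the offset, the conclusion follows immediately from Theorem~\ref{thm:eigsing_cont} applied to $(\widetilde A, \widetilde B, C, D, E)$, and no new analytic input is required beyond what is already established there.
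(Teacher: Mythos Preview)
Your proposal is correct and follows essentially the same approach as the paper: rotate by $\theta_\mathrm{r}=\pi/2-\theta$ so that $L(\theta,s)$ becomes the vertical line $\Re w=-s$, apply Theorem~\ref{thm:eigsing_cont} to the rotated data $(e^{\imagunit\theta_\mathrm{r}}A,\,e^{\imagunit\theta_\mathrm{r}}B,\,C,\,D,\,E)$, and then rotate back. The only differences are cosmetic---the paper verifies $\SVSeps(e^{\imagunit\theta_\mathrm{r}}A,e^{\imagunit\theta_\mathrm{r}}B,C,D,E)=e^{\imagunit\theta_\mathrm{r}}\SVSeps(A,B,C,D,E)$ via the perturbed system matrix $M(\Delta)$ rather than via the transfer function identity you use---and you have a small typo where you wrote $\SVSeps(\widetilde A, B, C, D, E)$ instead of $\SVSeps(\widetilde A, \widetilde B, C, D, E)$.
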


By Theorem~\ref{thm:anyline_search},
the boundary points of  $\SVSeps(A,B,C,D,E)$ along the horizontal line $L(0,\psi_k)$
are given by $\omega_j + \imagunit \psi_k$,
where $\{\imagunit \omega_1,\ldots,\imagunit \omega_l\}$ are the imaginary eigenvalues (sorted in increasing  order) of 
the rotated version of \eqref{eq:MNpencil_cont} given by Theorem~\ref{thm:anyline_search}.
Thus, $\psi_k  + \imagunit \omega_l$ is the rightmost boundary point along line $L(0,\psi_k)$, with 
$\omega_l > \eta$ (assuming the corresponding cross section had positive length).
Applying this procedure to each of the horizontal lines given by the $\psi_k$ midpoints
yields the solution to \eqref{eq:horizontal_search}.

\subsection{The complete directly-extended abscissa algorithm}
\label{sec:abs_old_complete}
Computing the pseudospectral abscissa, 
as originally specified in \cite{BurLO03},
begins with a vertical search and then alternates between horizontal and vertical searches,
to respectively increase estimate $x = \eta$ (monotonically) and find the new vertical cross sections;
see Figure~\ref{fig:cc_abs} for a visualization of this process.
The procedure converges to a globally rightmost point $\lambda_\star \in \sigma_\eps(A)$, with 
$\eta_\star = \Re \lambda_\star = \aleps(A)$. 
A critical requirement for global convergence is that the initial vertical search must be 
done to the right of a globally rightmost eigenvalue of matrix $A$; 
it cannot be done exactly through an eigenvalue of $A$
as this would violate the conditions of Theorem~\ref{thm:eigsing_cont}
so in practice a small perturbation is used.
Under a regularity assumption, the authors showed that the criss-cross method has local
quadratic convergence \cite[\S5]{BurLO03}.
The algorithm requires $\bigO(n^3)$ work and $\bigO(n^2)$ memory per iteration, 
both with notably large constants since it must compute all the imaginary eigenvalues of $q+1$ different matrix pencils of size $2n \times 2n$ per iteration: one pencil for the vertical search and $q$ pencils for the corresponding $q$ cross-sections of positive length in the horizontal search phase.
Provided that a structure-preserving and backward-stable Hamiltonian eigenvalue solver is used
for both vertical and horizontal searches, the method has been shown to be backwards stable
\cite[\S2.1.2]{Men06}, which was done by combining an upper bound on the accuracy from horizontal searches with a lower bound on the accuracy from vertical searches.
Via Theorems~\ref{thm:eigsing_cont} and \ref{thm:anyline_search}, the extension to computing the spectral value set abscissa is clear, but, as noted in \cite[\S6]{BurLO03}, there
is one last subtlety that must be addressed to ensure a robust implementation in practice.

\begin{figure}
\center
\subfloat[Vertical and horizontal searches]{
\includegraphics[scale=.25,trim={0cm 0 0cm 0},clip]{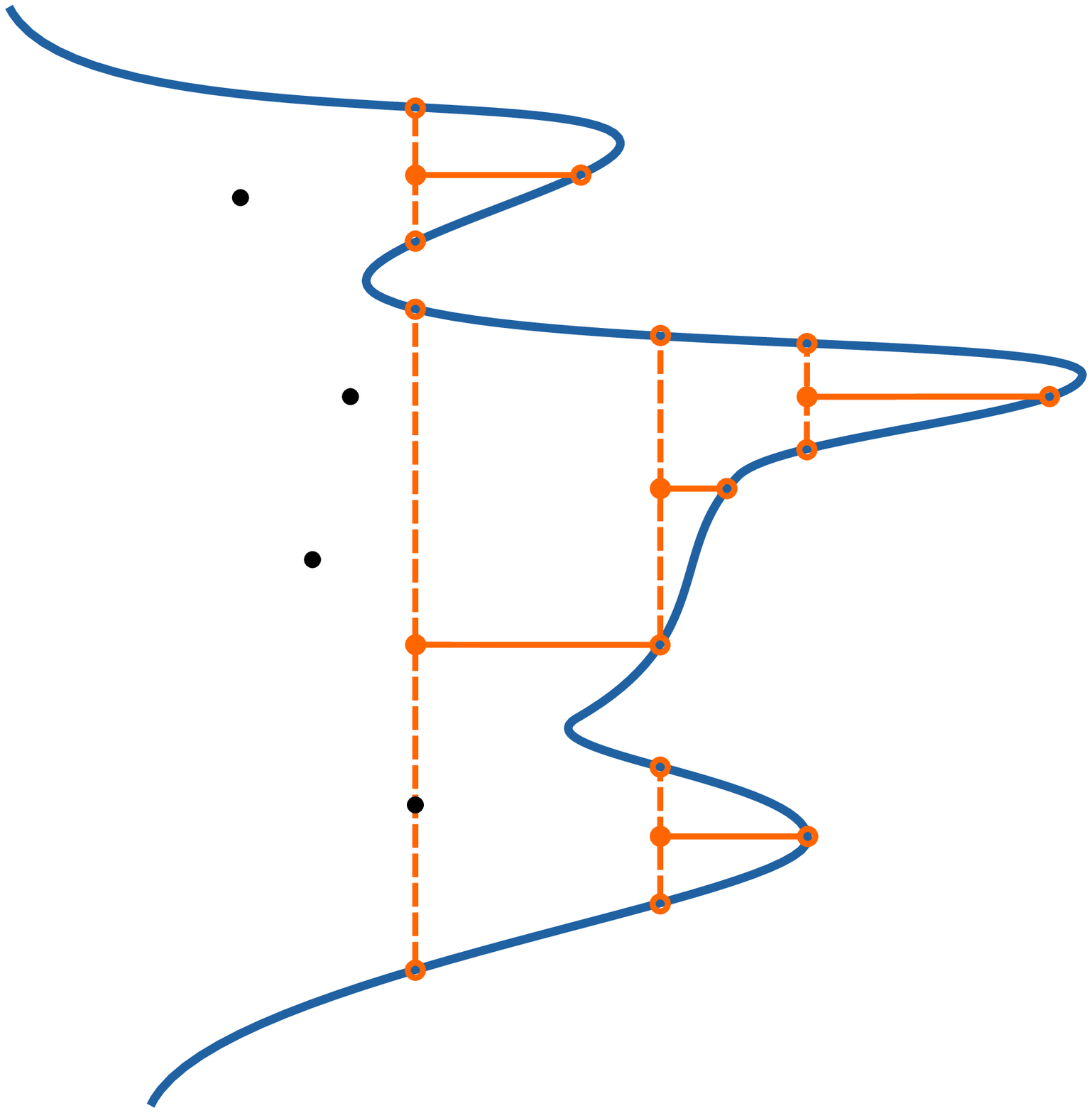} 
\label{fig:cc_abs}
}
\subfloat[Circular and radial searches]{
\includegraphics[scale=.25,trim={0cm 0 0cm 0},clip]{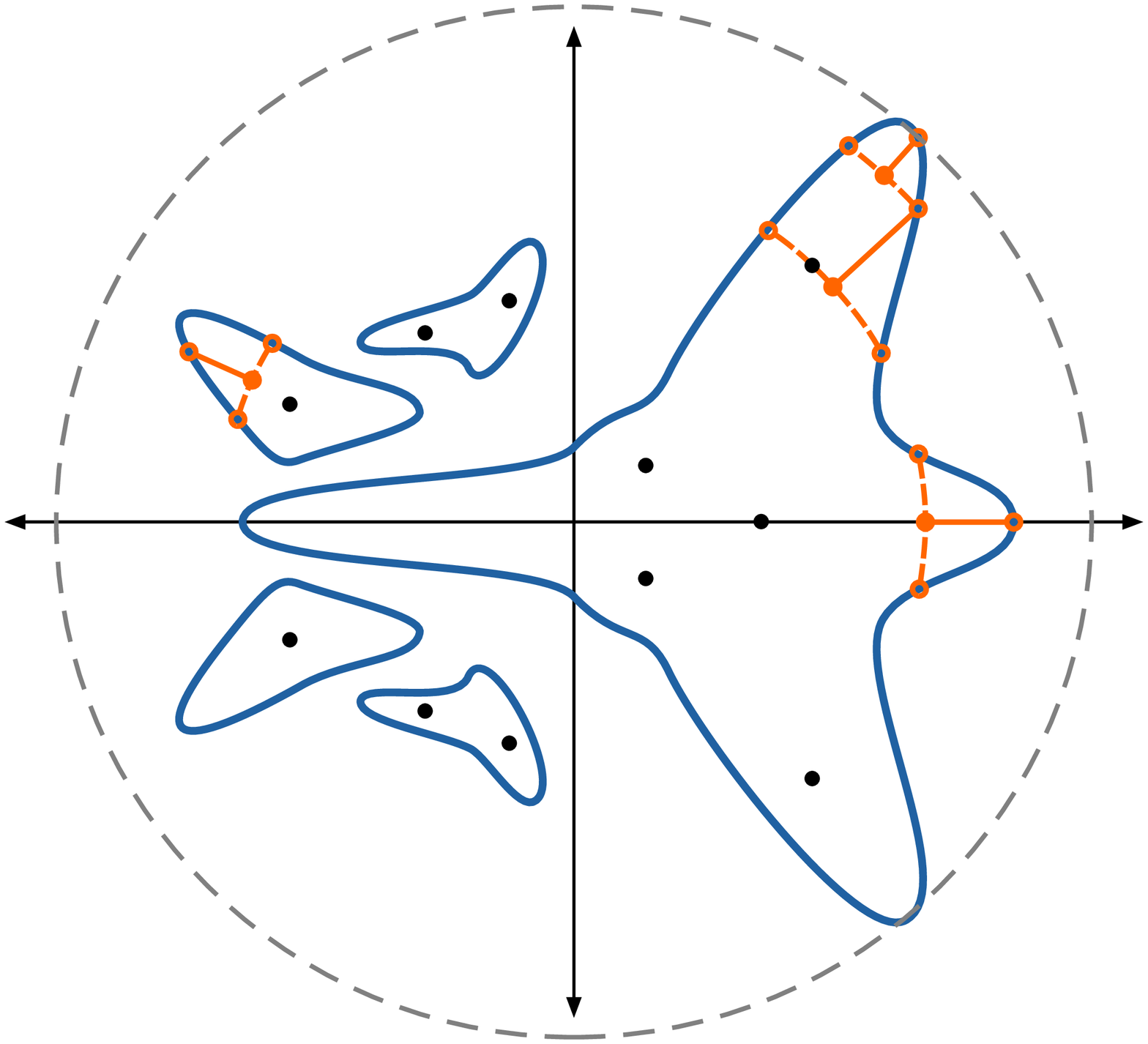}
\label{fig:cc_rad}
}
\caption{Illustrations of the iterations (shown in orange) of the directly-extended criss-criss methods.
The spectrum of $(A,E)$ and the spectral value set boundaries are respectively shown by black dots and blue contours.
For the abscissa case (left), the dashed vertical line segments with dots at their midpoints show the cross sections found by the vertical searches,
while the solid horizontal line segments depict the horizontal searches rightward.
For the radius case (right), the  dashed arcs, also with dots at their midpoints, show the results of the circular searches,
while the emanating rays depict the radial searches outward; 
the current best estimate of the spectral value set radius is shown by the grey dashed circle centered at the origin.
}
\label{fig:cc_diagrams}
\end{figure}

Suppose that a given vertical search passes through the interior of a spectral value set such 
that it intersects its boundary at three points and the resulting vertical cross-section intervals combined with the boundary to the right of them form an outline similar to the capital letter ``B".  
While the top and bottom boundary points on the vertical line will correspond to 
simple eigenvalues of \eqref{eq:MNpencil_cont}, the middle 
boundary point will correspond to a double eigenvalue.  
Due to rounding error, an eigensolver
may only return the upper and lower simple imaginary eigenvalues and fail to return
the double imaginary eigenvalue between them.
In this case, the algorithm would find a single vertical cross-section interval,
instead of the two actual adjacent intervals.  
If the missed double eigenvalue also happens to coincide with the midpoint 
of the larger computed interval, 
the subsequent (and only) horizontal search will not be able to make any rightward progress.
Hence, the algorithm will erroneously terminate there,
 failing to proceed to either of the  two locally rightmost points further to the right as it should.
 Per \cite[\S6]{BurLO03}, this pitfall can occur in practice, but remarkably, it 
 can be overcome via a simple fix: split any vertical cross-section 
into two whenever the previous best horizontal search
(i.e. the one that maximizes \eqref{eq:horizontal_search}) 
passes through it and is considered too close to its midpoint.
The previous horizontal search determines the split point and only
one interval may be split per vertical search, thus increasing the number of horizontal
searches incurred per iteration at most by one.

\begin{rema}
On \cite[p. 373]{BurLO03}, a second test is described to 
avoid splitting intervals too frequently (which would increase cost), based
on skipping the above safeguard whenever the intervals are deemed sufficiently small.
Oddly, this second test is not implemented in the authors' MATLAB routines,
although their comments in the code clearly refer to it as well.
Robustly implementing such a ``bypass" test also seems difficult:
given a problem which requires the safeguard to prevent stagnation,
the problematic cross-section interval can always be shrunk to any desired length by simply rescaling the entire problem, thus ensuring stagnation for this rescaled problem.
\end{rema}

\section{The improved abscissa algorithm}
\label{sec:abs_new}
While computing all the imaginary eigenvalues of  $\MNpencont$, the pencil given by the matrices in \eqref{eq:MNpencil_cont}, can be quite expensive,
merely computing the norm of the transfer function can be done much faster.
As we reported in \cite[Table 2]{BenM18a},
this performance gap ranged from up to 2.47 to 119 times faster for various random dense examples.
Thus, there is a potential to increase efficiency by working with 
the transfer function as opposed to Hamiltonian eigenvalue problems wherever possible.
This is particularly so for input-output systems, where $m,p \ll n$ is typical,
since computing the eigenvalues of $\MNpencont$ is relatively unaffected by dimensions $m$ and $p$
while the norm of the transfer function (a $p \times m$ matrix) becomes significantly cheaper to compute for small
$m,p$.  
Since obtaining all vertical cross sections on every iteration is necessary to ensure global convergence,
which as far as is known, must be done via computing all the eigenvalues of $\MNpencont$,
we instead focus on avoiding the difficult and expensive eigenvalue problems in the horizontal search phases.
As we will see in \S\ref{sec:numerical}, this approach can be several times faster than the directly-extended method
and also reduces numerical inaccuracies due to rounding errors in the eigensolves (see Figure~\ref{fig:eig_errors}).

\subsection{Horizontal searches via root finding}
Instead of using Theorem~\ref{thm:anyline_search} for horizontal searches,
as a first step in exploiting the above cost disparity,
we propose to find boundary points further to the right via root finding using the norm of the transfer function.
With $y$ specifying a fixed vertical position, 
we parameterize the largest singular value of the transfer function by the horizontal position $x$:
\begin{equation}
	\label{eq:ntfy_of_x}
	\ntfy(x) \coloneqq \| G(\lambda_y(x)) \|_2 = \| CZ_y(x)^{-1}B + D \|_2,
\end{equation}
where $\lambda_y(x) \coloneqq  x + \imagunit y$ and $Z_y(x) \coloneqq  \lambda_y(x) E - A$.
Then, with $\cs_k$ defining a cross section of nonzero length along vertical line $x=\eta$,
with midpoint $\eta + \imagunit \psi_k$,
the globally rightmost point of $\SVSeps(A,B,C,D,E)$ along line $L(0,\psi_k)$
is given by the rightmost root $x_\star$ of 
\beq
	\label{eq:horizontal_root}
	\ntfy(x) - \eps^{-1} = 0,
\eeq
with $x_\star = \omega_l > \eta$.
Since \eqref{eq:horizontal_root} may have more than one root to the right of line $x=\eta$,
rooting finding will not always guarantee that $x_\star$ is found.
However, for $\eta$ sufficiently close to the value of the $\eps$-spectral value set abscissa, 
$x_\star$ will be the only remaining root of \eqref{eq:horizontal_root} to the right
and so this will not be a problem as the algorithm converges.
Furthermore, if we maintain updating lower and upper bounds $x_\mathrm{lb} \ge \eta$ and $x_\mathrm{ub}$ 
such that $\eqref{eq:horizontal_root}$ is always positive at $x_\mathrm{lb}$ and 
always negative at $x_\mathrm{ub}$,
at least always finding a root $\tilde x_\star > \eta$ of \eqref{eq:horizontal_root} in bracket 
$(x_\mathrm{lb},x_\mathrm{ub})$
such that $\tilde x_\star + \imagunit \psi_y$ is also a locally rightmost point of 
$\SVSeps(A,B,C,D,E) \cap L(0,\psi_k)$ is guaranteed.
This bracketing scheme precludes the obviously suboptimal possibility of converging to 
a root $\tilde x_\star > \eta$ such that $\tilde x_\star + \imagunit \psi_y$ is a locally \emph{leftmost} point of 
$\SVSeps(A,B,C,D,E) \cap L(0,\psi_k)$.

While the current horizontal position $x=\eta$ always provides an initial lower bound,
an upper bound must be found iteratively but this is always doable; 
since $\lim_{x \to \infty} \ntfy(x) = \|D\|_2$ and $\eps \|D\|_2 < 1$,
\eqref{eq:horizontal_root} converges to some negative value as $x \to \infty$.
Furthermore, by exploiting first and possibly second derivatives of singular values, 
a hybrid Newton- or Halley-based root-finding method enforcing
our above bracketing convergence criteria could be employed; 
near roots of \eqref{eq:horizontal_root}, quadratic or cubic convergence would be expected.
Since the first and second derivatives of \eqref{eq:horizontal_root} are also relatively cheap to obtain 
compared to \eqref{eq:horizontal_root} itself (as discussed in \S\ref{sec:ntf_derivs}), 
it stands to reason that computing a root $\tilde x_\star > \eta$ may be
significantly faster than computing $x_\star$ via Theorem~\ref{thm:anyline_search},
at least for all but the smallest of problems.  

The first and second derivatives of \eqref{eq:ntfy_of_x} are as follows.
Suppose that $\ntfy(\hat x)$ is a simple singular value with associated left and right singular vectors $\hat u$ and $\hat v$.
As $\lambda_y^\prime(x) = 1$ and $\lambda_y^{\prime\prime}(x) = 0$, by \eqref{eq:GofLambda_derivs},
it follows that
\begin{subequations}
\label{eq:tfx_derivs}
\begin{align}
	\label{eq:tfx_deriv1}
	(G \circ \lambda_y)^\prime (x) &= - CZ_y(x)^{-1} E Z_y(x)^{-1} B \\
	\label{eq:tfx_deriv2}
	(G \circ \lambda_y)^{\prime\prime}(x) &= 2 C Z_y(x)^{-1} E Z_y(x)^{-1} E Z_y(x)^{-1} B.
\end{align}
\end{subequations}
Again by \eqref{eq:sigma_deriv1}, the first derivative of \eqref{eq:ntfy_of_x} at $\hat x$ is
\beq
	\label{eq:ntfx_deriv1}
	\ntfy^\prime(\hat x) = -\Real{ \hat{u}^* C Z_y(\hat x)^{-1} E Z_y(\hat x)^{-1} B \hat v },
\eeq
while its second derivative at $\hat x$ can be computed via applying Theorem~\ref{thm:eig2ndderiv} 
to matrix \eqref{eq:eigderiv_mat} with first and second derivatives \eqref{eq:eigderiv12_mat},
respectively defined by $G(\lambda_y(\hat x))$ and the matrix derivatives given in \eqref{eq:tfx_derivs} evaluated at $\hat x$.

\begin{rema}
While \cite[Theorem 4.1]{BurLO03} also considered the pseudospectral analogues of the 
first derivative given in \eqref{eq:ntfx_deriv1}, 
interestingly it was only used
analytically and not computationally to improve efficiency and accuracy, as we do here.
\end{rema}

We forgo further root-finding details and instead specify the following 
abstract function that we will utilize as a subroutine in our improved method.

\begin{defi}
\label{def:findrootroutine}
Let \emph{\texttt{[$r$,$\delta$] = findARootToTheRight($f(\cdot)$,$x_0$)}} be some
routine that implements the bracketing scheme described above, which
given a function $f(\cdot)$ and an initial guess $x_0$ with $f(x_0) > 0$, 
returns a root $r$ of $f(\cdot)$ such that $r > x_0$ and
$f(r + \mu) < 0$ for all $\mu \in (0,\beta)$ for some fixed value $\beta > 0$.
In inexact arithmetic, only $f(r) \approx 0$ is guaranteed and $r + \delta$ would have been the next Newton/Halley iterate.
\end{defi}

\subsection{Intelligently ordering the horizontal searches}
The second way we exploit the increased efficiencies of working with the transfer function 
is by intelligently ordering the $q$ horizontal searches on each iteration,
so that we solve the most promising ones first, 
i.e., the ones likely to provide the most rightward progress in the spectral value set.
Then, provided the predicted ordering is sufficiently accurate, already computed solutions to the more promising root problems can be leveraged to cheaply determine when solving the subsequent root problems is either not necessary or to at least warm start the computations.
Our exact procedure works as follows.

Observe that the left side of \eqref{eq:horizontal_root} provides a measure of distance between a
point $x + \imagunit y$ and the boundary of $\SVSeps(A,B,C,D,E)$.  
It thus stands to reason that a global optimizer of \eqref{eq:horizontal_search} 
might most likely lie on the particular horizontal line $\imagunit \psi_k$ that 
maximizes $\ntfx[\eta](y)$ for $y \in \{\psi_1,\ldots,\psi_q\}$.
However, we have found that using either
\beq
	\label{eq:priority}
	N_k = - \frac{\ntfy[\psi_k](\eta) - \eps}{\ntfy[\psi_k]^\prime(\eta)} 
	\quad \text{or} \quad 
	H_k = - \frac{
		2 \cdot \ntfy[\psi_k](\eta) \cdot \ntfy[\psi_k]^\prime(\eta) 
		}{
		2 \cdot [\ntfy[\psi_k]^\prime(\eta) ]^2 - \ntfy[\psi_k](\eta) \cdot \ntfy[\psi_k]^{\prime\prime}(\eta)
		},
\eeq
the initial Newton/Halley steps for each of the horizontal root-finding subproblems, often produces an even better ordering.  Before solving any of the root problems for the $q$ horizontal searches, we compute,
say $H_k$, at each of the $\eta + \imagunit \psi_k$ midpoints,
and then reorder the searches so that they are in descending order with respect to their $H_k$ values.
For convenience, assume that the $\psi_k$ midpoints are already in this order.
Let $\tilde x_\star$ be a computed root of \eqref{eq:horizontal_root} for $y = \psi_1$, 
which had nothing but $\eta$ to use as a starting point.
We then warm start the second solve, \eqref{eq:horizontal_root} for $y = \psi_2$, 
using $\tilde x_\star$ as a starting point.
If the left side of \eqref{eq:horizontal_root} is negative at $\tilde x_\star$,
we immediately have an upper bound on a root for $\psi_2$ 
that is worse (to the left) than root $\tilde x_\star$ for $\psi_1$ and 
we have no evidence that there are any other roots to the right; 
hence, we can completely skip solving \eqref{eq:horizontal_root} for $\psi_2$ 
and instead proceed to $\psi_3$.
Similarly, if the left side of \eqref{eq:horizontal_root} is exactly zero at $\tilde x_\star$, 
then \eqref{eq:horizontal_root} is already solved but does not yield a better root so we again proceed to $\psi_3$ without further computation.
However, if the left side of \eqref{eq:horizontal_root} for $\psi_2$ is positive, 
then solving \eqref{eq:horizontal_root} for $\psi_2$ initialized at $\tilde x_\star$ must yield a root $\hat x_\star > \tilde x_\star$,
and so the solve should proceed.  
We continue to warm start the subsequent subproblems with the current best root, 
a clearly better strategy than solving them all initialized at $x = \eta$.

Finally, if the rightmost computed approximate root of \eqref{eq:horizontal_root} 
ends up corresponding to a point just inside the spectral value set, 
we slightly perturb it so that it is just outside, by adding a multiple of the final Newton/Halley 
step.  
This slight modification prevents the algorithm from incurring two vertical searches just before termination
when only one is necessary numerically.
The full pseudocode is given in Subroutine~\ref{alg:fastsearch}.
While we have not used parallelism in our description,
it could potentially further improve running times.

\begin{algfloat}[t]
\begin{subroutine}[H]
\caption{\texttt{[$x$,$\psi$] = fastSearch($\eta_0$,$\{\psi_1,\ldots,\psi_q\}$)}}
\label{alg:fastsearch}
\begin{algorithmic}[1]
	\REQUIRE{ \quad \\
		$\eta_0 \in \R$, an initial lower bound \\ 
		$\{\psi_1,\ldots,\psi_q\}$ with each $\psi_k \in \R$ \\ 
	}
	\ENSURE{ \quad \\
		$x > \eta$, the rightmost root \emph{encountered} of 
			\eqref{eq:horizontal_root} over $y \in \{\psi_1,\ldots,\psi_q\}$ or $\eta = \eta_0$
		\\ \quad
	}
	
	\STATE compute the initial Newton (or Halley) steps $\{N_1,\ldots,N_q\}$ using \eqref{eq:priority}
		\label{algline:initial_steps}
	\STATE let $\{\psi_1,\ldots,\psi_q\}$ be ordered such that $N_k$ is decreasing with respect to all $k$
		\label{algline:midpoint_sort_fn}
	\STATE $x \coloneqq \eta_0$; $\psi \coloneqq []$; $\delta \coloneqq []$
	\FOR { $k = 1,\ldots,q$ }
		\STATE set function handle $f(\cdot) \coloneqq \ntfy[\psi_k](\cdot) - \eps^{-1}$ \COMMENT{\eqref{eq:horizontal_root} defined for $y=\psi_k$}
			\label{algline:root_fn}
		\IF { $f(x) > 0$ }
			\STATE \texttt{[$x$,$\delta$] $\coloneqq$ findARootToTheRight($f(\cdot)$,$x$)}
			\STATE $\psi \coloneqq \psi_k$
		\ENDIF
	\ENDFOR 
	\STATE \COMMENT{Ensure computed (approximate) rightmost root is not just inside the interior  } 
	\IF { $\delta$ is \NOT [] \AND $f(x) > 0$ }
		\STATE $\delta \coloneqq \min \{| \delta | , | x | \cdot \emach \}$ 
			\COMMENT{Make sure $x + \delta > x$ holds to machine precision}
		\STATE $x \coloneqq \min \{ x + k\delta : f(x + k\delta) \le 0 \text{ for } k \in \{1,2,\ldots\} \}$
	\ENDIF
\end{algorithmic}
\end{subroutine}
\end{algfloat}

\subsection{The complete improved abscissa algorithm}
\label{sec:svs_abs_alg}
Naturally, we advocate using \texttt{fastSearch} (Subroutine~\ref{alg:fastsearch}) in lieu of the earlier 
expensive eigenvalue-based horizontal searches.
However, we also propose one last but simple modification: to start with 
a horizontal search rather than a vertical one.  This has two benefits.
First, it often reduces the total number of horizontal searches incurred.
The number of vertical cross sections generally decreases as $\eta \to \eta_\star$ and 
since there can be up to $n$ cross sections, which is more likely when $\eta \ll \eta_\star$, 
the reduction in horizontal searches can be dramatic (though our \texttt{fastSearch} subroutine
generally only resolves at most a handful of roots when given many searches).
Moreover, by having a better (larger) initial estimate of $\eta_\star$, the number of vertical searches may also be reduced.
While most of our efficiency gains will be from \texttt{fastSearch},
this additional change also has non-negligible effect
(see Table~\ref{table:svs_abs} in \S\ref{sec:numerical}). 
The second benefit is that it eliminates the need for an ad hoc perturbation 
to do the first vertical search just to the right of an eigenvalue $(A,E)$,
since doing it through an eigenvalue would violate the assumptions 
of Theorem~\ref{thm:eigsing_cont}.
Initialization is typically done from a controllable and observable eigenvalue of $(A,E)$; in practice, it is best to provide a minimal realization as input.
Pseudocode for the complete improved method is given in Algorithm~\ref{alg:crisscross}.

\begin{algfloat}[t]
\begin{algorithm}[H]
\caption{\texttt{[$\eta$] = svsAbscissa($\eps$,$A$,$B$,$C$,$D$,$E$)}}
\label{alg:crisscross}
\begin{algorithmic}[1]
	\REQUIRE{ \quad \\
		$\eps > 0$ with $\eps \|D\|_2 < 1$ and matrices $A$, $B$, $C$, $D$, $E$ defining $\SVSeps(A,B,C,D,E)$
	}
	\ENSURE{ \quad \\
		$\eta$, the computed value of $\aleps(A,B,C,D,E)$ 
		\\ \quad
	}

	\STATE $\Lambda \coloneqq$ \texttt{eig($A$,$E$)}
	\STATE $\Lambda \coloneqq \{ \lambda \in \Lambda : \lambda ~\text{meets user's inclusion criteria: controllable/observable}\}$
	\IF {$ \infty \in \Lambda$ } 
		\RETURN $\eta = \infty$ 
	\ENDIF \label{alg:cc_pre2}
	\STATE $ \lambda_0 \coloneqq \argmax \{ \Re \lambda : \lambda \in \Lambda\}$
	\STATE \COMMENT{More efficient to start with a horizontal search instead of a vertical one:}
	\STATE\texttt{[$\eta$,$y$]} $\coloneqq$ \texttt{fastSearch($\Re \lambda_0$,$\{  \Im \lambda_0 \}$)}
	\WHILE { $\eta < \aleps(A,B,C,D,E)$ }
		\STATE compute imaginary eigenvalues $\{\imagunit y_1,\ldots,\imagunit y_l\}$ 
				of \eqref{eq:MNpencil_cont} for $x=\eta$ and $\gamma = \eps^{-1}$
		\STATE form all intervals $\cs_k = [y_k,y_{k+1}]$ s.t. $\eta + \imagunit y \in \SVSeps(A,B,C,D,E) ~\forall y \in \cs_k$
		\STATE $\Psi \coloneqq \{\psi_1,\ldots,\psi_q\}$ such that $\psi_k$ is a midpoint of interval $\cs_k$ 
		\STATE \texttt{[$\eta$,$y$]} $\coloneqq$ \texttt{fastSearch($\eta$,$\Psi$)}
	\ENDWHILE
\end{algorithmic}
\end{algorithm}
\end{algfloat}

\section{Directly extending the pseudospectral radius algorithm}
\label{sec:rad_old} 
We now describe and directly extend the pseudospectral radius method of \cite{MenO05} to the spectral value set radius, by generalizing its alternating circular and radial search phases.  Although respectively 
analogous to the vertical and horizontal searches described in \S\ref{sec:abs_old}, 
a key difference here is that circular searches may sometimes fail, 
either because the corresponding pencils are singular or the searches do not intersect with the spectral value set  boundary. 
For now, we assume neither of these happen.

\subsection{Circular search}
\label{sec:circular_search}
We now give an analogue of Theorem~\ref{thm:eigsing_cont}
that relates singular values of the norm transfer function evaluated 
at points on a chosen circle of radius $r > 0$ centered at the origin
with unimodular eigenvalues of an associated sympletic pencil.
Less general versions of this result go back as far as \cite[\S3]{HinS91}, 
for the special case of fixed radius $r=1$, $D=0$, and $E=I$.
Similarly to Theorem~\ref{thm:eigsing_cont}, we defer the proof to Appendix~\ref{apdx:proofs}.

\begin{theo}
\label{thm:eigsing_disc}
Let $r > 0$ be the radius of a circle centered at the origin, angle $\theta \in [0,2\pi)$,
$\gamma > 0$ not a singular value of $D$, and $\lambda E - A$ be regular.
Consider the matrix pencil $\MNpendisc$, where
\beq
	\label{eq:MNpencil_disc}
	\begin{aligned}
	\Md \coloneqq {}& \begin{bmatrix} 	A - BR^{-1}D^*C 	& -\gamma BR^{-1}B^* \\ 
										0 				& rE^* \end{bmatrix}, \\
	\Nd \coloneqq {}& \begin{bmatrix} 	rE & 0\\ 
									-\gamma C^*S^{-1}C 	& A^* - C^*DR^{-1}B^* \end{bmatrix},
	\end{aligned}
\eeq
$R = D^*D - \gamma^2 I$ and $S = DD^* - \gamma^2 I$.
Then $\eitheta$ is an eigenvalue of $\MNpendisc$ if and only if
$\gamma$ is a singular value of $G(r\eitheta)$ and $r\eitheta$ is not an eigenvalue of $(A,E)$.
\end{theo}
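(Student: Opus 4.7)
The plan is to mirror the approach that should work for Theorem~\ref{thm:eigsing_cont} (continuous-time), but adapted so that the unimodular number $\zeta=e^{\imagunit\theta}$ appears multiplicatively as an eigenvalue. The entire proof will hinge on two symbolic manipulations: (i) writing the singular-value relation $\gamma$ of $G(r\zeta)$ with singular vectors $u,v$ as a coupled linear system involving the resolvent-applied vectors
\[
x \;=\; (zE-A)^{-1}Bv, \qquad y \;=\; (\bar z E^{*}-A^{*})^{-1}C^{*}u, \qquad z=r\zeta,
\]
and (ii) using the commuting identity $D(D^{*}D-\gamma^{2}I)=(DD^{*}-\gamma^{2}I)D$, i.e.\ $DR^{-1}=S^{-1}D$ (equivalently $D^{*}S^{-1}=R^{-1}D^{*}$), to make the pencil $(\Md,\Nd)$ appear in the expected form with matching off-diagonal blocks. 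This identity is crucial because without it the two off-diagonal entries in $\Md$ and $\Nd$ would not align with the expressions that naturally drop out of the singular-value equations.

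For the forward direction, I will assume that $\gamma$ is a singular value of $G(r\zeta)$ with unit singular vectors $u,v$ and $r\zeta\notin\spec(A,E)$, so that $x,y$ above are well defined. The defining relations $G(z)v=\gamma u$ and $G(z)^{*}u=\gamma v$ give $\gamma u = Cx+Dv$ and $\gamma v = B^{*}y + D^{*}u$; solving this $2\times 2$ block system for $u$ and $v$ in terms of $x,y$ yields
\[
v \;=\; -\gamma R^{-1}B^{*}y - R^{-1}D^{*}Cx, \qquad u \;=\; -\gamma S^{-1}Cx - S^{-1}DB^{*}y,
\]
where invertibility of $R,S$ follows from the assumption that $\gamma$ is not a singular value of $D$. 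Substituting these into the resolvent identities $zEx=Ax+Bv$ and $\bar z E^{*}y = A^{*}y + C^{*}u$, using $z=r\zeta$, $\bar z = r\zeta^{-1}$, and multiplying the second relation through by $\zeta$, produces exactly the two block rows of $\Md\!\begin{bmatrix}x\\y\end{bmatrix}=\zeta\,\Nd\!\begin{bmatrix}x\\y\end{bmatrix}$ after invoking the $DR^{-1}=S^{-1}D$ identity to rewrite $C^{*}S^{-1}DB^{*}=C^{*}DR^{-1}B^{*}$. Nondegeneracy of $[x^{*},y^{*}]^{*}$ follows because if $Bv=0$ then $G(z)v=Dv=\gamma u$ would force $\gamma$ to be a singular value of $D$, contradicting the hypothesis.

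For the reverse direction, I start from a pencil eigenpair $(\zeta,[x^{*},y^{*}]^{*})$ with $\zeta=e^{\imagunit\theta}$, set $z=r\zeta$, and \emph{define} $v$ and $u$ by the same formulas displayed above. Reading the two block rows of $\Md\!\begin{bmatrix}x\\y\end{bmatrix}=\zeta\,\Nd\!\begin{bmatrix}x\\y\end{bmatrix}$ backwards yields (again using $|\zeta|=1$ and the commuting identity) the relations $zEx=Ax+Bv$ and $\bar z E^{*}y=A^{*}y+C^{*}u$. Under the assumption that $r\zeta\notin\spec(A,E)$ this inverts to $x=(zE-A)^{-1}Bv$, $y=(\bar z E^{*}-A^{*})^{-1}C^{*}u$, from which a short calculation gives $Cx+Dv=\gamma u$ and $B^{*}y+D^{*}u=\gamma v$, i.e.\ $G(z)v=\gamma u$ and $G(z)^{*}u=\gamma v$. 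It remains to rule out $u=v=0$: if this occurred then $x$ and $y$ would both vanish (since $Rv=-\gamma B^{*}y-D^{*}Cx$ and similarly for $Su$ collapse to $0$ only when $x=y=0$ modulo the injectivity of $R,S$), contradicting the eigenvector being nonzero.

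The main obstacle I anticipate is the bookkeeping in the off-diagonal block cancellations, specifically ensuring that the expressions $C^{*}S^{-1}DB^{*}$ produced from eliminating $u$ and $v$ really coincide with the $C^{*}DR^{-1}B^{*}$ appearing inside $\Nd$; this depends entirely on the identity $DR^{-1}=S^{-1}D$, and I would state and verify it up front so the rest of the proof is bookkeeping. A secondary subtlety is the nondegeneracy checks in both directions, which must explicitly use the hypothesis that $\gamma$ is not a singular value of $D$ and that $r\zeta$ avoids $\spec(A,E)$; these are precisely the hypotheses of the theorem, so the proof cannot be closed without invoking them.
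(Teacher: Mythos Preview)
Your approach is essentially the paper's own: the auxiliary vectors you call $x,y$ are the paper's $q,s$, the block system you set up is exactly \eqref{eq:EAqs2_disc}, and the decisive step of multiplying the second block row by $\zeta=e^{\imagunit\theta}$ to turn $\bar z=r\zeta^{-1}$ into $r\zeta$ is precisely the paper's left-multiplication by $\mathrm{diag}(I,-e^{\imagunit\theta}I)$. Your emphasis on the identity $DR^{-1}=S^{-1}D$ is correct but not strictly needed if one uses the explicit block inverse \eqref{eq:Dgamma_inv_cont}, whose $(2,2)$ entry is already $-DR^{-1}$ and so matches $\Nd$ directly; this is a purely cosmetic difference.

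There is, however, a small but genuine gap in your reverse-direction nondegeneracy argument. From $u=v=0$ the relations $Rv=-\gamma B^{*}y-D^{*}Cx$ and $Su=-\gamma Cx-DB^{*}y$ only yield two linear constraints on $Cx$ and $B^{*}y$; they do not by themselves force $x=y=0$ (take, e.g., $x\ne 0$ with $Cx=0$). The correct argument uses the resolvent identities you already derived: if $v=0$ then $zEx=Ax+Bv=Ax$, and since $z=r\zeta\notin\spec(A,E)$ this gives $x=0$; similarly $u=0$ gives $y=0$, contradicting that $[x^{*},y^{*}]^{*}$ is an eigenvector. With this fix your proof goes through and coincides with the paper's.
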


Setting $\gamma = \eps^{-1}$, Theorem~\ref{thm:eigsing_disc} provides a means to compute all the boundary points, if any, 
of an $\eps$-spectral value set that lie on any desired circle of radius $r$ centered at the origin.
More specifically, let $\{\theta_1,\ldots,\theta_l\}$ be the set of angles, all in $[0,2\pi)$ and sorted in increasing order, 
given by the (we assume nonempty) set of unit-modulus eigenvalues of \eqref{eq:MNpencil_disc}.
Thus, each point $r e^{\imagunit \theta_j}$ is a boundary point of $\SVSeps(A,B,C,D,E)$. 
As in \S\ref{sec:vertical_search}, determining the subset of arcs on the circle of radius $r$ that pass through the spectral value set 
can be reliably done by just evaluating the norm of the transfer function at the midpoints of all the candidate arc segments
given by $\cs_k = [\theta_k,\theta_{k+1}]$ for $k = 1,\ldots,l-1$,
but now
the additional ``wrap-around" interval $[\theta_l,\theta_1 + 2\pi]$ must be also be considered.

\subsection{Radial search}
\label{sec:radial_search}
Given a circle of radius $r = \eta$,
let $\cs_k = [\theta_k,\theta_{k+1}]$ denote a non-zero length arc of this circle which also lies in $\SVSeps(A,B,C,D,E)$ and
\mbox{$\cs = \{\cs_1,\ldots,\cs_q\}$} denote the set of all such arcs.
Similar to the abscissa case, \cite{MenO05} proposed to make outward progress by 
taking the midpoints of these arc segments, i.e. $\psi_k = 0.5 ( \theta_k + \theta_{k+1})$, 
as directions of rays from the origin on which to find more distant boundary points.
The maximal outward progress is then:
\beq
	\label{eq:radial_search}
	\max_{\cs_k \in \cs} \max \{| \lambda | : \lambda \in \SVSeps(A,B,C,D,E) ~\text{and}~ \Arg \lambda = \psi_k \},
\eeq
which can be solved by applying Theorem~\ref{thm:anyline_search} to each of the lines $L(\psi_k,0)$
and taking the outermost of all the computed boundary points.
Of course, we will instead adapt our new faster \texttt{fastSearch} subroutine; see \S\ref{sec:rad_new}.

\subsection{The complete directly-extended radius algorithm}
The method of \cite{MenO05} alternates between 
radial and circular searches to respectively increase estimate $r = \eta$ (monotonically)
and find new arc-shaped cross sections of the pseudospectrum.
A robust implementation also requires the splitting safeguard 
described at the end of \S\ref{sec:abs_old_complete}. 
It  converges to a globally outermost point $\lambda_\star$ of 
$\sigma_\eps(A)$ with \mbox{$\eta_\star = | \lambda_\star | = \rhoeps(A)$},
with a local quadratic convergence rate \cite[\S2.4]{MenO05};
a sample of the iterations is depicted visually in Figure~\ref{fig:cc_rad}.
However, global convergence is not just predicated upon initializing the algorithm 
with an initial radius $r \ge \rho(A)$;
the method must also handle the aforementioned possibility of circular searches failing.
This problem was dealt with in \cite[\S2.5]{MenO05} in the following manner. 
We first present respective generalizations of \cite[Theorem~2.11 and Corollary~2.12]{MenO05};
the proofs extend directly via simple substitutions.

\begin{theo}
\label{thm:singular_pencils}
Given some $r>0$, if the matrix pencil defined by \eqref{eq:MNpencil_disc} is singular and the 
largest singular value of $G(r\eitheta)$ is simple for all $\theta \in [0,2\pi)$, then either:
\begin{enumerate}
\item the boundary of $\SVSeps(A,B,C,D,E)$ contains the circle of radius $r$ or
\item the circle of radius $r$ is strictly inside $\SVSeps(A,B,C,D,E)$.
\end{enumerate}
\end{theo}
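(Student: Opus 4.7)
My plan is to reduce both conclusions to statements about the scalar function $s(\theta) \coloneqq \|G(re^{\imagunit\theta})\|_2$, using the fact that the singularity of $\MNpendisc$ forces $\eps^{-1}$ to be a singular value of $G(re^{\imagunit\theta})$ for essentially every $\theta$. Concretely, a singular matrix pencil has every $z \in \C$ as an eigenvalue, so in particular every unimodular $e^{\imagunit\theta}$ is an eigenvalue of $\MNpendisc$; applying Theorem~\ref{thm:eigsing_disc} (with $\gamma = \eps^{-1}$) then shows that $\eps^{-1}$ is a singular value of $G(re^{\imagunit\theta})$ at every $\theta$ in the cofinite set $\Theta \coloneqq \{\theta \in [0,2\pi) : re^{\imagunit\theta} \notin \spec(A,E)\}$. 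In particular, $s(\theta) \geq \eps^{-1}$ throughout $\Theta$, and Corollary~\ref{cor:svs_tf} puts the entire circle (including the finitely many points in $\spec(A,E)$) in $\SVSeps\abcde$.

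Next I would use a connectedness argument to show that the set $T \coloneqq \{\theta \in \Theta : s(\theta) = \eps^{-1}\}$ is either empty or all of $\Theta$. Closedness of $T$ in $\Theta$ is immediate from continuity of $s$ (the simplicity hypothesis makes the largest singular value real-analytic on $\Theta$). For openness, fix $\theta_1 \in T$: the hypothesis that the largest singular value of $G(re^{\imagunit\theta_1})$ is simple, combined with $s(\theta_1) = \eps^{-1}$, means that $\eps^{-1}$ appears exactly once among the singular values at $\theta_1$, so all remaining singular values are strictly less than $\eps^{-1}$. By continuity of the singular values as functions of $\theta$, this strict inequality persists in a neighborhood of $\theta_1$; meanwhile, the ``always present'' singular value $\eps^{-1}$ keeps contributing $\eps^{-1}$ to the spectrum. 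Hence $s(\theta) = \eps^{-1}$ on that neighborhood, giving openness. Since $\Theta$ is connected (the complement in $[0,2\pi)$ is a finite set, but more carefully $[0,2\pi)$ minus finitely many points is still connected as a circle), it follows that $T = \Theta$ or $T = \emptyset$.

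In the case $T = \Theta$, I claim the circle must in fact avoid $\spec(A,E)$ altogether: at an eigenvalue $\lambda_0 \in \spec(A,E)$ on the circle, $\|G(\lambda)\|_2 \to \infty$ as $\lambda \to \lambda_0$ along the circle, which would contradict $s \equiv \eps^{-1}$ on any punctured neighborhood of $\theta_0$. Therefore $\Theta = [0,2\pi)$ and $\|G(re^{\imagunit\theta})\|_2 \equiv \eps^{-1}$, which by the boundary characterization in \S\ref{sec:svs} places the entire circle on $\partial \SVSeps\abcde$, giving conclusion~1. In the case $T = \emptyset$, we have $s(\theta) > \eps^{-1}$ strictly on $\Theta$, and the excluded finitely many points (eigenvalues of $(A,E)$ on the circle) are already strictly interior to $\SVSeps\abcde$ because $\|G\|_2$ blows up near them. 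By compactness, $s(\theta) \geq \eps^{-1} + \delta$ on a closed set covering $\Theta$, so continuity of $\|G(\cdot)\|_2$ off $\spec(A,E)$ yields an open neighborhood of the circle on which $\|G\|_2 > \eps^{-1}$, which by Corollary~\ref{cor:svs_tf} is contained in $\SVSeps\abcde$; this gives conclusion~2.

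The main obstacle is the openness step: one must be careful that the ``constant branch'' at $\eps^{-1}$ is really the same singular value that realizes $s(\theta_1)$ and that no singular value strictly below $\eps^{-1}$ can jump upward through $\eps^{-1}$ in a neighborhood. The simplicity of the largest singular value at $\theta_1$ is precisely what rules this out, by supplying a uniform gap that persists under small perturbations of $\theta$. The remaining bookkeeping about points where $G$ has poles on the circle is minor but needs to be handled in both cases to reach the stated global dichotomy.
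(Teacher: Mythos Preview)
The paper does not supply its own proof here; it merely states that the argument of \cite[Theorem~2.11]{MenO05} extends directly via simple substitutions. Your plan is the natural one and almost certainly matches that source: singularity of $\MNpendisc$ makes every $e^{\imagunit\theta}$ an eigenvalue, Theorem~\ref{thm:eigsing_disc} then forces $\eps^{-1}$ to be a singular value of $G(re^{\imagunit\theta})$ for every admissible $\theta$, so $s(\theta)\ge\eps^{-1}$ everywhere, and the simplicity hypothesis together with a clopen argument yields the stated dichotomy. The openness step is handled exactly as you describe.

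One genuine wobble to clean up: your claim that ``$[0,2\pi)$ minus finitely many points is still connected as a circle'' is false as soon as two or more points are removed, so the clopen argument does not immediately give the global dichotomy $T=\Theta$ or $T=\emptyset$ in that case. The simplest repair is to note that the hypothesis ``the largest singular value of $G(re^{\imagunit\theta})$ is simple for all $\theta\in[0,2\pi)$'' already requires $G(re^{\imagunit\theta})$ to be defined for every $\theta$, i.e., the circle of radius $r$ avoids $\spec(A,E)$ entirely; then $\Theta$ is the full circle, which is connected, and your argument runs as written. If you prefer to allow poles on the circle, run the clopen argument on each arc of $\Theta$ separately: on any arc with an endpoint in $\spec(A,E)$ you have $s\to\infty$ there, which rules out $T$ being the full arc, forcing $T=\emptyset$ on that arc and hence globally. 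Either route recovers the dichotomy, and your two case analyses then finish the proof correctly.
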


\begin{coro}
\label{cor:avoid_singular}
Suppose that for some fixed $r > 0$, $\|G(r\eitheta)\|_2 - \eps^{-1} < 0$ holds for at least one angle $\theta \in [0,2\pi)$.  
Then the matrix pencil defined by \eqref{eq:MNpencil_disc} is regular.
\end{coro}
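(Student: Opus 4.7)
The plan is to prove this by contraposition, leveraging Theorem~\ref{thm:singular_pencils} essentially in its contrapositive form. That is, I would assume the pencil $\MNpendisc$ is singular and derive that the hypothesis of the corollary must fail, i.e., $\|G(r\eitheta)\|_2 \ge \eps^{-1}$ must hold for every $\theta \in [0,2\pi)$ such that $r\eitheta \not\in \spec(A,E)$.

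First, I would invoke the standing simplicity assumption on the largest singular value of $G$ so that Theorem~\ref{thm:singular_pencils} applies. Assuming $\MNpendisc$ is singular, that theorem yields two cases: either the circle of radius $r$ is contained in the boundary of $\SVSeps(A,B,C,D,E)$, or it lies strictly inside the spectral value set. In the first case, the boundary characterization noted just after Corollary~\ref{cor:svs_tf} gives $\|G(r\eitheta)\|_2 = \eps^{-1}$ wherever $G$ is defined on the circle. In the second case, each interior point $r\eitheta \not\in \spec(A,E)$ has $\|G(r\eitheta)\|_2 > \eps^{-1}$, since the complement of the interior inside $\C\setminus\spec(A,E)$ is precisely $\{\lambda : \|G(\lambda)\|_2 < \eps^{-1}\}$ by Corollary~\ref{cor:svs_tf}. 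In either case we conclude $\|G(r\eitheta)\|_2 \ge \eps^{-1}$ everywhere on the circle where $G$ is defined, contradicting the hypothesis of the corollary.

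The only subtlety I anticipate concerns the finitely many angles $\theta$ (if any) at which $r\eitheta \in \spec(A,E)$, where $G(r\eitheta)$ is undefined. Since $\spec(A,E)$ is a finite set, these are isolated on the circle, so the hypothesis that $\|G(r\eitheta_0)\|_2 - \eps^{-1} < 0$ for some specific $\theta_0$ necessarily concerns a $\theta_0$ at which $G$ is defined, and the argument above produces the desired contradiction at that $\theta_0$. Thus no continuity extension through poles is actually needed.

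Overall, this is a short deductive step rather than a technical result: the work has already been done in Theorem~\ref{thm:singular_pencils}, and the main thing to be careful about is simply bookkeeping the equivalence between membership in $\SVSeps$ and the inequality $\|G\|_2 \ge \eps^{-1}$ given by Corollary~\ref{cor:svs_tf}, together with its strict version on the interior.
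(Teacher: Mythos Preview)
Your proposal is correct and follows essentially the same approach the paper intends: the corollary is presented as the spectral-value-set analogue of \cite[Corollary~2.12]{MenO05}, and the paper simply notes that the proof extends directly, i.e., it is the immediate contrapositive consequence of Theorem~\ref{thm:singular_pencils} combined with the transfer-function characterization of $\SVSeps$ from Corollary~\ref{cor:svs_tf}. Your handling of the finitely many angles where $r e^{\imagunit\theta}\in\spec(A,E)$ is appropriate and does not affect the argument.
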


First, \cite{MenO05} proposed starting the algorithm with a single radial search along the ray from the origin through
a globally rightmost eigenvalue $\lambda_0$ of $A$. 
By applying Theorem~\ref{thm:anyline_search} to find $\lambda_\mathrm{bd}$, a globally outermost point  of $\SVSeps(A) \cap L(\Arg \lambda_0,0)$ or the solution of \eqref{eq:radial_search} in the direction of $\lambda_0$,
Corollary~\ref{cor:avoid_singular} asserts that the matrix pencil given by \eqref{eq:MNpencil_disc} is regular for all $r > | \lambda_\mathrm{bd}|$.
Moreover, since the corresponding circular searches must then always have portions outside of $\SVSeps(A)$, 
they are also guaranteed not to be problematic interior searches.
However, in exact arithmetic, the possibility of an initial circular search with radius $|\lambda_\mathrm{bd}|$ 
corresponding to a singular pencil cannot be ruled out.
Furthermore, the computed version of $\lambda_\mathrm{bd}$, which we denote $\tilde \lambda_\mathrm{bd}$,
may be strictly inside $\SVSeps(A)$ and so the possibility that a circular search of radius $| \tilde \lambda_\mathrm{bd}|$ does not intersect with
the pseudospectral boundary cannot be ruled out either.
Thus, \cite{MenO05} also proposed potentially increasing the radius of the very first circular search 
from $|\tilde \lambda_\mathrm{bd}|$ to $|\tilde \lambda_\mathrm{bd}| + k \delta_0$,
where $\delta_0$ is the initial Newton step to change the magnitude of $\tilde \lambda_\mathrm{bd}$ in order to move it to the pseudospectral boundary and $k$ is the smallest nonnegative integer such that adding $k \delta_0$ to its magnitude indeed puts the resulting point outside out of $\SVSeps(A)$.
When $\tilde \lambda_\mathrm{bd}$ is strictly inside the pseudospectrum, 
$\delta_0 > 0$ holds and the authors noted that small $k$ (e.g. 1 or 2) typically sufficed
to move $\tilde \lambda_\mathrm{bd}$ outside; otherwise $k=0$ is taken.\footnote{In fact, this is essentially the same perturbation procedure we have employed 
at the end of \texttt{fastSearch} 
but motivated by very different reasons.}
By Corollary~\ref{cor:avoid_singular}, it is not necessary to perturb any subsequent 
circular searches but there is a caveat.  If the perturbation is too 
small the resulting pencil may be \emph{nearly} singular and thus still problematic to solve (which we have observed in practice), or alternatively, the perturbation is large but then accuracy may be sacrificed.
While this procedure extends to the directly-extended spectral value set radius algorithm, 
it does not for our improved radius algorithm.

\begin{rema}
In \cite{MenO05}, starting with a radial search 
only seems to be for avoiding singular pencils;
no mention is made that it can also have efficiency benefits.
\end{rema}

\section{The improved radius algorithm}
\label{sec:rad_new}
Before describing \texttt{fastSearch} for the radial phases, 
to efficiently find locally-optimal solutions of \eqref{eq:radial_search},
note that the loss of global optimality in these searches violates
the necessary assumptions to use the existing technique of \cite{MenO05} for handling
singular pencils and/or interior circular searches.
We now adapt \texttt{fastSearch} and then
propose a new compatible technique to overcome such difficult pencils/searches.

\subsection{Adapting \texttt{fastSearch} for the radial phase}
Parameterizing the largest singular value of the transfer function 
in polar coordinates, with varying radius $r$ for a \emph{fixed angle} $\theta$, yields
\begin{equation}
	\label{eq:ntft_of_r}
	\ntft(r) \coloneqq \| G(\lambda_\theta(r)) \|_2 = \| CZ_\theta(r)^{-1}B + D \|_2,
\end{equation}
where $\lambda_\theta(r) \coloneqq r\eitheta$ and $Z_\theta(r) \coloneqq \lambda_\theta(r) E - A$.
Hence each outward search along a ray in direction $\theta$ is 
done by finding a root of
\beq
	\label{eq:radial_root}
	\ntft(r) - \eps^{-1} = 0.
\eeq
The first and second derivatives of \eqref{eq:ntft_of_r} are as follows.
Assume that $\ntft(\hat r)$ is a simple singular value, with left and right singular vectors $\hat u$ and $\hat v$.
As $\lambda_\theta^\prime(r) = \eitheta$ and $\lambda_\theta^{\prime\prime}(r) = 0$, 
by \eqref{eq:GofLambda_derivs}, it follows that 
\begin{subequations}
\label{eq:tfr_derivs}
\begin{align}
	\label{eq:tfr_deriv1}
	(G \circ \lambda_\theta)^\prime(r) &= - \eitheta CZ_\theta(r)^{-1} E Z_\theta(r)^{-1} B \\
	\label{eq:tfr_deriv2}
	(G \circ \lambda_\theta)^{\prime\prime}(r) &= 2 e^{2\imagunit \theta} C Z_\theta(r)^{-1} E Z_\theta(r)^{-1} E Z_\theta(r)^{-1} B,
\end{align}
\end{subequations}
and so by \eqref{eq:sigma_deriv1}, the first derivative of \eqref{eq:ntft_of_r} at $\hat r$ is
\beq
	\label{eq:ntfr_deriv1}
	\ntft^\prime(\hat r)  = -\Real{ \eitheta \hat{u}^* C Z_\theta(\hat r)^{-1} E Z_\theta(\hat r)^{-1} B \hat v }.
\eeq
Using \eqref{eq:tfr_derivs}, the second derivative of \eqref{eq:ntft_of_r} at $\hat r$ can be computed via Theorem~\ref{thm:eig2ndderiv}.
The subproblems given by \eqref{eq:radial_root} are prioritized in descending order with respect to
their initial Newton/Halley steps, i.e. \eqref{eq:priority} with $\ntfy[\psi_k](\eta)$ replaced by $\ntft[\psi_k](\eta)$.
Thus remaining modification to \texttt{fastSearch} replaces \eqref{eq:horizontal_root} with \eqref{eq:radial_root} 
in Subroutine~\ref{alg:fastsearch}.

\begin{rema}
Similar to \cite{BurLO03}, 
the pseudospectral analogue of the 
first derivative given in \eqref{eq:ntfr_deriv1}
was considered in \cite[Theorem 2.3]{MenO05}, but 
it too was not used computationally to improve efficiency, as we do here.
\end{rema}

\subsection{A new method for handling singular pencils and interior searches}
\label{sec:singular_pencils}
Although \texttt{fastSearch} is guaranteed to converge 
to a global solution of \eqref{eq:radial_search}
for $\eta$ sufficiently close to $\eta_\star$, 
it may only return locally-optimal solutions 
for smaller values of $\eta$.
Consequently, and in contrast to the directly-extended algorithm,
we cannot rule out the possibility of encountering a (nearly) singular pencil 
or problematic interior search on \emph{any} iteration.
It might seem tempting to just apply the perturbation technique of \cite{MenO05}
to every iteration, but this comes with the accuracy-versus-reliability tradeoff
mentioned above.  However, since \texttt{fastSearch} finds boundary points 
to high accuracy, $\delta_0$ will generally be tiny, 
meaning that using the earlier singular pencil procedure of \cite{MenO05} would almost always result 
in pencils that are still nearly singular.
The technique of \cite{MenO05} is reasonable for the directly-extended algorithm 
because a) its $\delta_0$ value is generally much larger due to the relatively higher inaccuracy 
of obtaining the solution to \eqref{eq:radial_search} via computing eigenvalues 
(Figure~\ref{fig:eig_errors} demonstrates such errors) and 
b) it is only needed once rather than multiple times (where the chance of encountering a single failure increases significantly).
Faced with such difficulties, we consider an entirely new approach, using the following new result.

\begin{theo}
\label{thm:avoid_singular}
Given $\eps > 0$ with $\eps \|D\|_2 < 1$, set $\gamma = \eps^{-1}$ and 
let $\eta$ be such that the circle of radius $\eta$ centered at the origin
both encircles all the eigenvalues of $(A,E)$ and is strictly in the interior of $\SVSeps(A,B,C,D,E)$.
Let $\delta > 0$ be the largest value such that, for all $t\in [0,1]$, circles of radius $\eta + t\delta$
are still subsets of $\SVSeps(A,B,C,D,E)$.
Finally, let $R = \{r_1,\ldots,r_l\}$ denote the subset of positive radii corresponding to the
boundary points of $\SVSeps(A,B,C,D,E)$ that lie on $L(\theta,0)$ but are outside the circle of radius $\eta$, 
where $\theta \in [0,2\pi)$ has been chosen randomly.
Then for $\hat r = \min \{r_1,\ldots,r_l\} > \eta$, either of the two following scenarios may hold:
\begin{enumerate}
\item $\MNpendisc$ is singular for $r=\eta+\delta$ 
	but \mbox{$\hat r = \eta + \delta = \rhoeps(A,B,C,D,E)$} or
\item $\MNpendisc$ is regular for $r = \eta + \delta$ and, with probability one, $\hat r  > \eta + \delta$.
\end{enumerate}
\end{theo}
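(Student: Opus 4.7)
The plan is to perform case analysis on whether $\MNpendisc$ is singular at $r = \eta + \delta$, combining the structural dichotomy from Theorem~\ref{thm:singular_pencils} with the path-connectivity provided by Lemma~\ref{lem:path} in the singular case, and the finite-eigenvalue structure of regular pencils (via Theorem~\ref{thm:eigsing_disc}) together with a simple measure-theoretic argument in the regular case.

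\textbf{Case 1: the pencil is singular at} $r = \eta + \delta$. First I would invoke Theorem~\ref{thm:singular_pencils} to conclude that either the boundary of $\SVSeps(A,B,C,D,E)$ contains the circle of radius $\eta + \delta$ or else that circle lies strictly in the interior. The latter option I would rule out by the definition of $\delta$: if the circle were strictly interior, a compactness/continuity argument would produce a neighborhood of it inside $\SVSeps(A,B,C,D,E)$, allowing circles of slightly larger radius to still be subsets, which contradicts the maximality of $\delta$. Thus the circle of radius $\eta + \delta$ lies on the boundary. Along the ray $L(\theta,0)$, points with radii in $[\eta, \eta+\delta)$ belong to $\SVSeps(A,B,C,D,E)$ via the definition of $\delta$, while the point at radius $\eta + \delta$ is a boundary point, yielding $\hat r = \eta + \delta$ immediately.

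The substantive step is to conclude $\rhoeps(A,B,C,D,E) = \eta + \delta$. Here I would argue by contradiction using Lemma~\ref{lem:path}: suppose some boundary point $\lambda^\star$ of $\SVSeps(A,B,C,D,E)$ has $|\lambda^\star| > \eta + \delta$. After handling the (generic non-)isolated case, Lemma~\ref{lem:path} furnishes a continuous path $\lambda(t)$ beginning at some eigenvalue of $(A,E)$ (which by hypothesis lies strictly inside the circle of radius $\eta$) and ending at $\lambda^\star$, and the path touches the boundary only at $t = 1$. But by continuity and the intermediate value theorem, the path must cross the circle of radius $\eta + \delta$ at some $t_0 < 1$, where $\lambda(t_0)$ is then a boundary point -- contradiction. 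This rules out any spectral value set points strictly outside the disk of radius $\eta + \delta$, giving $\rhoeps(A,B,C,D,E) = \eta + \delta$.

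\textbf{Case 2: the pencil is regular at} $r = \eta + \delta$. By Theorem~\ref{thm:eigsing_disc}, the set of angles $\alpha \in [0,2\pi)$ for which $(\eta + \delta) e^{\imagunit \alpha}$ is a boundary point of $\SVSeps(A,B,C,D,E)$ corresponds to the unimodular eigenvalues of $\MNpendisc$, a finite set of cardinality at most $2n$. For $\theta$ drawn from a distribution absolutely continuous with respect to Lebesgue measure on $[0, 2\pi)$, this finite exceptional set is avoided with probability one. Off this set, $(\eta + \delta) e^{\imagunit \theta}$ lies in $\SVSeps(A,B,C,D,E)$ (since the whole circle is a subset) without being a boundary point, hence is interior; the ray $L(\theta,0)$ therefore extends strictly beyond radius $\eta + \delta$ inside the interior of $\SVSeps(A,B,C,D,E)$ before reaching its first boundary crossing, so $\hat r > \eta + \delta$.

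The main obstacle I expect is the Case~1 uniqueness argument, namely ruling out additional connected components of $\SVSeps(A,B,C,D,E)$ sitting outside the disk of radius $\eta + \delta$; Lemma~\ref{lem:path} is precisely the tool that forces every non-isolated boundary point to be reachable from a spectral point by an interior path, and the assumption that all eigenvalues of $(A,E)$ lie strictly inside the circle of radius $\eta$ is what makes the contradiction sharp. The Case~2 argument is comparatively routine once the regularity of the pencil is used to reduce to a finite set of angles.
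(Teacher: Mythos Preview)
Your proposal is correct and follows essentially the same route as the paper: a case split on singularity of the pencil at $r = \eta + \delta$, with Theorem~\ref{thm:singular_pencils} plus the path argument of Lemma~\ref{lem:path} handling the singular case, and the finiteness of unimodular eigenvalues of a regular pencil (via Theorem~\ref{thm:eigsing_disc}) giving the probability-one conclusion in the regular case. If anything, you are slightly more explicit than the paper in ruling out the strictly-interior branch of Theorem~\ref{thm:singular_pencils} via the maximality of $\delta$, and in flagging the non-isolated hypothesis required to invoke Lemma~\ref{lem:path}.
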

\begin{proof}
We first consider the case where $\MNpendisc$ is singular at $r = \eta + \delta$.
Since $\eta + \delta \in R$, it must be that $r\eitheta$ is a boundary point of $\SVSeps(A,B,C,D,E)$,
and by Theorem~\ref{thm:singular_pencils},
the circle of radius $r$ centered at the origin must be a subset of the boundary of $\SVSeps(A,B,C,D,E)$.
Furthermore, $\rhoeps(A,B,C,D,E) \ge r$.  
If strict inequality holds, then there must exist some boundary point $\hat\lambda \in \SVSeps(A,B,C,D,E)$ with $|\hat\lambda| > r$.
But this contradicts the conclusion of Lemma~\ref{lem:path},
that there exists a path taking some controllable and observable eigenvalue of $(A,E)$
to $\hat\lambda$ such that only $\lambda(1) = \hat \lambda$ is a boundary point,
since any such $\lambda(t)$ must also cross the circle of radius $r$ at some $t < 1$.
Hence, $r=\hat r$ as $R$ only contains a single unique value, namely $\eta+\delta$.

Now suppose $\MNpendisc$ is regular at $r = \eta + \delta$.
By assumption, 
the circle of radius $\eta + \delta$ only touches the spectral value boundary but does not cross it.
Furthermore, since the pencil is regular, by Theorem~\ref{thm:eigsing_disc},
there can only be a finite number (at most $n$) of contact points between this circle and the spectral value set boundary.
Suppose that $\hat r = \eta + \delta$, noting that by assumption, $\hat r$ cannot be any smaller.   
Then, for boundary point $\hat r\eitheta$, 
its angle $\theta$ must be equal to one of the angles corresponding to the finite set of contact points.
As $\theta \in [0,2\pi)$ was chosen randomly, the probability of this event occurring is zero.
Therefore, with probability one, $\theta$ will not correspond to any of 
the contact points on the circle of radius $\eta + \delta$ and thus, $\hat r > \eta + \delta$.
\end{proof}

Theorem~\ref{thm:avoid_singular} clarifies what to do when \texttt{fastSearch} returns a point 
$\tilde \lambda_\mathrm{bd}$ with $| \tilde \lambda_\mathrm{bd} | = \eta$ such that, 
due to rounding error, $\tilde \lambda_\mathrm{bd}$ is distance $\delta$ inside $\SVSeps(A,B,C,D,E)$ and
the subsequent circular search for $r = \eta$ returns no arc cross sections.
Either the algorithm has actually converged to $\rhoeps(A,B,C,D,E)$
within the numerical limits of the root-finding method itself 
or, by reapplying \texttt{fastSearch} in a random direction given by $\theta \in [0,2\pi)$ through interior point $\eta \eitheta$,
the algorithm can, with probability one, obtain a new more distant point beyond the 
current problematic local area involving singular pencils and/or interior searches.
Put more simply, problematic circular searches encountered on any iteration can be overcome by applying \texttt{fastSearch}
in one or more random directions and if the algorithm still converges to a singular pencil, then with probability one
it has also converged to $\rhoeps(A,B,C,D,E)$. 

\subsection{The complete improved radius algorithm}
Like our improved abscissa method, the improved radius algorithm
uses \texttt{fastSearch} but now adapted for the radial searches.
It starts with a single initial radial search outward, from an outermost eigenvalue of $(A,E)$ (typically controllable and observable),
and then alternates between circular and radial searches.
However, whenever a circular search does not return any arc cross sections of $\SVSeps(A,B,C,D,E)$,
which generally would be a sign of convergence, 
the new algorithm must also consider the possibility that the search simply failed.
Thus, whenever no arc cross sections are obtained, 
the improved algorithm simply applies \texttt{fastSearch} in one or more randomly chosen directions in $[0,2\pi)$
to distinguish between convergence and encountering interior searches or (nearly) singular pencils.
If the algorithm has indeed converged, calling \texttt{fastSearch} has no effect, 
except for the relatively small additional cost
to evaluate the norm of the transfer function at a handful of random points.
If the algorithm has not converged, then by Theorem~\ref{thm:avoid_singular},
with probability one the method is guaranteed to increase its current estimate of $\rhoeps(A,B,C,D,E)$ 
beyond the problematic region.
Furthermore, as along as \emph{any} outward progress is being made,
\texttt{fastSearch} will continued to be called with new random directions every iteration
until either a subsequent circular search returns
one or more arc cross sections or \texttt{fastSearch} can no longer increase the radius estimate at all.
This allows the algorithm to robustly push past problematic regions where successive circular searches may fail to return any arcs.  
However, we have observed that typically only a single attempt is necessary in practice.  
Pseudocode for the complete improved radius method is given in Algorithm~\ref{alg:crisscrossradius}.

\begin{algfloat}[t]
\begin{algorithm}[H]
\caption{\texttt{[$\eta$] = svsRadius($\eps$,$A$,$B$,$C$,$D$,$E$)}}
\label{alg:crisscrossradius}
\begin{algorithmic}[1]
	\REQUIRE{ \quad \\
		$\eps > 0$ with $\eps \|D\|_2 < 1$ and matrices $A$, $B$, $C$, $D$, $E$ defining $\SVSeps(A,B,C,D,E)$ \\
		$r$ a positive integer, specifying how many random angles to try 
	}
	\ENSURE{ \quad \\
		$\eta$, the computed value of $\rhoeps(A,B,C,D,E)$, with probability one
		\\ \quad
	}
	
	\STATE $\Lambda \coloneqq$ \texttt{eig($A$,$E$)}
	\STATE $\Lambda \coloneqq \{ \lambda \in \Lambda : \lambda ~\text{meets user's inclusion criteria: controllable/observable}\}$
	\IF {$ \infty \in \Lambda$ } 
		\RETURN $\eta = \infty$ 
	\ENDIF 
	\STATE $ \lambda_0 \coloneqq \argmax \{ |\lambda| : \lambda \in \Lambda\}$
	\STATE $\Psi \coloneqq \{\Arg \lambda_0, \psi_1,\ldots,\psi_r\}$ such that $\psi_k$ is chosen randomly from $[0,2\pi)$
	\STATE \texttt{[$\eta$,$\theta$]} $\coloneqq$ \texttt{fastSearch($|\lambda_0|$,$\Psi$)}
	\WHILE { $\eta < \aleps(A,B,C,D,E)$ }
		\STATE compute unimodular eigenvalues $\{e^{\imagunit \theta_1},\ldots,e^{\imagunit \theta_l}\}$ 
				of \eqref{eq:MNpencil_disc} for $r=\eta$ and $\gamma = \eps^{-1}$
		\STATE form all intervals $\cs_k = [\theta_k,\theta_{k+1}]$ s.t. $\eta \eitheta \in \SVSeps(A,B,C,D,E) ~\forall \theta \in \cs_k$
		\IF {no such intervals}
			\STATE $\Psi \coloneqq \{\psi_1,\ldots,\psi_r\}$ such that $\psi_k$ is chosen randomly from $[0,2\pi)$ 
		\ELSE
			\STATE $\Psi \coloneqq \{\psi_1,\ldots,\psi_q\}$ such that $\psi_k$ is a midpoint of interval $\cs_k$ 
		\ENDIF	
		\STATE \texttt{[$\eta$,$\theta$]} $\coloneqq$ \texttt{fastSearch($\eta$,$\Psi$)}
	\ENDWHILE
\end{algorithmic}
\end{algorithm}
\end{algfloat}

\section{Global Convergence}
\label{sec:convergence}
We give the following proof of convergence, which is simpler and less technical than those given in 
\cite{BurLO03} and \cite{MenO05}.

\begin{theo}
Algorithms~\ref{alg:crisscross} and \ref{alg:crisscrossradius} converge to the 
$\eps$-spectral value set abscissa and radius, respectively.
\end{theo}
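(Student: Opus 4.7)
The plan is to prove monotone convergence for both algorithms via a common three-step scheme: show $\{\eta_k\}$ is monotonically nondecreasing, bounded above by the sought quantity, and hence convergent; then rule out a premature limit by contradiction.

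First, monotonicity of $\{\eta_k\}$ follows from Definition~\ref{def:findrootroutine}: \texttt{findARootToTheRight} only returns values strictly greater than its input, so the $x$ output by \texttt{fastSearch} (Subroutine~\ref{alg:fastsearch}) never decreases below its input, while the vertical/circular searches in Algorithms~\ref{alg:crisscross} and~\ref{alg:crisscrossradius} do not touch $\eta$. The upper bound is equally direct: apart from the deliberate nudge at the end of \texttt{fastSearch} (which can be absorbed into the root-finding tolerance), each updated $\eta_k$ is the real part (resp.\ modulus) of a boundary point of $\SVSeps(A,B,C,D,E)$, and is therefore at most $\aleps(A,B,C,D,E)$ (resp.\ $\rhoeps(A,B,C,D,E)$). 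Combining monotonicity with the upper bound gives $\eta_k \to \eta_\infty$ for some limit $\eta_\infty$.

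For Algorithm~\ref{alg:crisscross}, I would argue by contradiction that $\eta_\infty = \aleps$. Assume $\eta_\infty < \aleps$ and pick a globally rightmost point $\lambda_\star \in \SVSeps$. By Lemma~\ref{lem:path}, a continuous path within $\SVSeps$ joins some eigenvalue $\lambda_0 \in \spec(A,E)$ to $\lambda_\star$, and the initialization at line~6 of Algorithm~\ref{alg:crisscross} guarantees $\Re\lambda_0 \le \eta_0 \le \eta_\infty$. Hence the path, which goes from real part $\le \eta_\infty$ to real part $\aleps > \eta_\infty$, must cross the vertical line $x = \eta_\infty$. Since $\{\lambda : \|G(\lambda)\|_2 > \eps^{-1}\}$ is the interior of $\SVSeps$ (away from isolated points excluded by the user criteria), I will argue that its intersection with $x = \eta_\infty$ contains at least one cross section $\cs^\star$ of strictly positive length, whose midpoint $\eta_\infty + \imagunit\psi_\star$ satisfies $\|G(\eta_\infty + \imagunit\psi_\star)\|_2 > \eps^{-1}$. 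Then the univariate function $f(x) = \ntfy[\psi_\star](x) - \eps^{-1}$ is positive at $x = \eta_\infty$ and, since $f(x) \to \|D\|_2 - \eps^{-1} < 0$ as $x \to \infty$, must have a root $\tilde x > \eta_\infty$. A continuity argument then shows that for all sufficiently large $k$ the vertical search at $\eta_k$ yields a cross section with midpoint near $\eta_k + \imagunit\psi_\star$ at which the analogous $f$ is still positive, so \texttt{fastSearch} returns $\eta_{k+1} > \eta_\infty + c$ for some fixed $c > 0$, contradicting $\eta_k \to \eta_\infty$.

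For Algorithm~\ref{alg:crisscrossradius} the same template applies along rays through the origin rather than vertical lines, with $\lambda_\star$ a globally outermost point and $|\lambda_0|$ the initial lower bound. The one genuinely new ingredient is iterations at which the circular search returns no arcs, either because the circle lies strictly inside $\SVSeps$ or because $\MNpendisc$ is (nearly) singular. In that event Algorithm~\ref{alg:crisscrossradius} calls \texttt{fastSearch} in $r$ random angles from $[0,2\pi)$, and Theorem~\ref{thm:avoid_singular} guarantees that, with probability one, either $\eta_k$ already equals $\rhoeps$ or at least one random ray strictly increases the radius estimate; the rest of the contradiction then proceeds verbatim. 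The hardest step throughout is the topological claim that the cross section $\cs^\star$ (and analogously the arc cross section in the radius case) has strictly positive length, which is what ultimately forces \texttt{fastSearch} to make non-vanishing progress; I would ground this in the simple-largest-singular-value assumption of \S\ref{sec:ntf_derivs}, which ensures that the path from $\lambda_0$ to $\lambda_\star$ can be perturbed to lie in the open set $\{\|G\|_2 > \eps^{-1}\}$ away from the endpoint $\lambda_\star$, whence its intersection with $x = \eta_\infty$ is a nonempty open subset of the line.
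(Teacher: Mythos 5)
Your proposal follows essentially the same route as the paper's proof: establish monotonicity of $\{\eta_k\}$ and an upper bound, then use Lemma~\ref{lem:path} to argue by contradiction that any premature limit $\hat\eta < \eta_\star$ would still admit a positive-length cross section whose midpoint forces further progress, with Theorem~\ref{thm:avoid_singular} handling singular pencils in the radius case. One minor remark: your final paragraph invokes the simple-largest-singular-value assumption to perturb the path into the open interior, but this is unnecessary --- Lemma~\ref{lem:path} already guarantees $\lambda(t)$ is an interior point for all $t\in[0,1)$, which is precisely how the paper obtains the positive neighborhood radii $\delta(t)>0$.
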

\begin{proof}
Let $\eta_\star$ be the value of the $\eps$-spectral value set abscissa/radius,
attained at some non-isolated globally rightmost (outermost) point $\lambda_\star$,
and $\{\eta_k\}$ be the sequence our methods generate,
which by construction must be monotonically increasing
and $\eta_k \le \eta_\star$ must hold.
Let $\lambda(t)$ be one of the continuous paths, specified by Lemma~\ref{lem:path},
taking an eigenvalue of $(A,E)$ to $\lambda_\star$ with $\mathcal{N}(t) \subset \SVSeps(A,B,C,D,E)$ a neighborhood of $\lambda(t)$ of radius $\delta(t) > 0$ for all $t \in [0,1)$.
Setting $f(t) = \Re \lambda(t)$ ($f(t) = | \lambda(t) |$),
there exists $t_0 \in [0,1)$ such that $f(t_0) = \eta_0$.
So suppose that
$\eta_k \to \hat\eta < \eta_\star$.
By Theorem~\ref{thm:avoid_singular}, encountering singular pencils can be ruled out since this only occurs
in the radius case when $\hat \eta = \eta_\star$.
Let $\cs(\eta)$ denote the set of intervals corresponding to vertical (circular) cross sections varying by $x =\eta$ ($r=\eta$) and consider:
\[
	l(\eta) \coloneqq \max_{\cs_k \in \cs(\eta) } \{ | \omega_{k+1} - \omega_{k} | : \cs_k = [\omega_k, \omega_{k+1}] \}.
\]
As singular pencils are excluded, by continuity of eigenvalues, $l(\eta)$ must be continuous on $[\eta_0,\hat \eta]$.
Given the (possibly disjoint) subset $D \subset [t_0,1)$ where $f(t)$ is strictly increasing,
there exists $\hat t \in D$ such that $f(\hat t) = \hat \eta$.
Thus $l(f(t)) \ge \delta(t) > 0$ holds for all $t \in D$ and $l(\eta) \not\to 0$ as $\eta \to \hat \eta$.
This implies continuous convergence to a cross section of positive length at $\hat \eta$,
whose midpoint 
must of course be strictly in the interior of the spectral value set.
Hence, the methods cannot stagnate at $\hat \eta$.
\end{proof}

\section{Implementation}
\label{sec:code}
We implemented Algorithms~\ref{alg:crisscross} and \ref{alg:crisscrossradius} in a single new \matlab\ routine called \texttt{specValSet}, which is publicly available as part of the open-source library ROSTAPACK: RObust STAbility PACKage, starting with the v2.0 release.\footnote{
\url{http://timmitchell.com/software/ROSTAPACK}
}
For the radius case, 
whenever no intervals are obtained, \texttt{specValSet}
generates three random angles for \texttt{fastSearch} for invoking Theorem~\ref{thm:avoid_singular}.
By default, all evaluations of the norm of the transfer function are done using the Hessenberg factorization techniques
of \cite{Lau81,VanV85} mentioned at the end of \S\ref{sec:ntf_derivs}, though \texttt{specValSet} also supports using LU factorizations.

An evaluation of which bracketing and root-finding method would be most efficient to use 
for implementing the prerequisite subroutine \texttt{findARootToTheRight} (specified in Definition~\ref{def:findrootroutine})
is beyond the scope of this article.
We implemented a second-order version of \texttt{findARootToTheRight}.
It first brackets a root by iteratively increasing the current
guess by adding the larger of either two times the absolute value of the Halley step 
or the distance from the current guess and the initial guess $x_0$, until an upper bound has been found 
(while increasing the lower bound along the way).
Then it computes a root using a hybrid Halley's method with our bracketing.
We found that this was generally more efficient than using first-order schemes.
The very first step of the upper bound search increases the initial guess by at least $\max \{10^{-6},0.01|x_0| \}$.
If the function given to \texttt{findARootToTheRight} fails to return a finite value,
our code simply updates the lower bound and increases the current guess.

As a practical optimization, for when all the matrices are real valued but $\lambda_0$ is not, 
\texttt{specValSet} always attempts to first find a root along the $x$-axis, 
either to the right of $\lambda_0$ (or outward in either direction for the $\eps$-spectral value set radius)
before computing a solution to the root problem for $\lambda_0$.
Assuming such a root exists along the $x$-axis, 
the initial $\eps$-spectral value set abscissa (or radius) estimate $\eta$ will be increased, 
from $\eta = \alpha(A,E)$ (or $\eta = \rho(A,E)$)
to some larger value corresponding to a boundary point $\SVSeps\abcde$ on the $x$-axis.
Even though this strategy potentially introduces an additional horizontal search (or two radial searches), 
it often substantially reduces the overall number of \emph{complex-valued} SVDs incurred,
replacing them with \emph{much cheaper real-valued ones}.  
This optimization can have a significant net benefit in terms of running time because it can sometimes require many iterations 
to find an upper bound for the root-finding problem for $\lambda_0$, 
which without this optimization, would be initialized at $\lambda_0$, a pole of the transfer function.

The \texttt{specValSet} routine has the following similarities to the \texttt{pspa} and \texttt{pspr} routines of \cite{robstabcontmengi},
the respective implementations of the original criss-cross type methods for 
computing the pseudospectral abscissa \cite{BurLO03} and 
the pseudospectral radius \cite{MenO05}.
First, if the problem is real valued, the spectral value sets are symmetric with respect to the $x$-axis;
in this case, any interval $\Omega_k \in \Omega$ that corresponds to a section in the open lower half-plane
is discarded (since it is ``duplicated" by its positive conjugate).
Second, as \texttt{pspr} does not use a structure-preserving eigensolver, 
we used \texttt{eig} from \matlab\ for all codes in the benchmarks done here;
note that any robust implementation should use structure-preserving eigensolvers,
such as those available in SLICOT \cite{BenMSetal99}. 
Third, \texttt{specValSet} simply terminates when the $\eps$-spectral value set abscissa/radius estimate $\eta$ 
can no longer be increased, by any amount; no tolerance is needed.

\section{Numerical experiments}
\label{sec:numerical}
All experiments were done in \matlab\ R2017b on a laptop 
with an Intel i7-6567U dual-core CPU, 16GB of RAM, and macOS v10.14.
Running times were measured using \texttt{tic} and \texttt{toc}; to account for variability,
we report the average time of five trials for each method-problem pair.
For \texttt{specValSet}, we used ROSTAPACK v2.2 and set \texttt{rng(100)} before each trial (as it uses random numbers).

\subsection{Spectral value set evaluation}
\label{sec:num_svs}
We used 15 publicly-available spectral value set test examples of varying dimensions: 
four problems (CBM, CM3, CM4, CSE2) from \cite{GugGO13} and 
another 11 from the SLICOT benchmark examples.\footnote{
Available at \url{http://slicot.org/20-site/126-benchmark-examples-for-model-reduction}
}
Since some of the examples have nonzero $D$ matrices, and $\eps \|D\|_2 < 1$ must hold,
we instead calculated specific per-problem values of $\eps$ as follows.
We computed the continuous- and discrete-time $\Linf$ norms for each example, 
via \texttt{getPeakGain} with a tolerance of $10^{-14}$,
to be respectively used for the $\eps$-spectral value set abscissa and radius evaluations.
Let $\gamma_\star$ denote the corresponding computed $\Linf$-norm value, for
either the abscissa or radius case.
We then set $\eps \coloneqq 2\gamma_\star$, 
provided that $\gamma_\star$ was a finite positive value and $\eps \|D\|_2 < 0.5$ held.
Otherwise, for problems with nonzero $D$ matrices, we used $\eps \coloneqq 0.5 \|D\|_2^{-1}$ 
and $\eps \coloneqq 0.01$ for the rest.
Each problem was initialized at a rightmost/outermost controllable and observable eigenvalue of $(A,E)$.

\begin{table}[!t]
\small
\setlength{\tabcolsep}{3.5pt}
\robustify\bfseries
\center
\begin{tabular}{ l | ccc |cc | cc | cc | cl | @{}S@{}S | c } 
\toprule
\multicolumn{15}{c}{Spectral Value Set Abscissa: directly extended versus new method} \\
\midrule
\multicolumn{1}{c}{} & \multicolumn{3}{c}{Dimensions} &
	\multicolumn{4}{c}{\# solves} & \multicolumn{4}{c}{\# searches} & 
	\multicolumn{3}{c}{} \\
\cmidrule(lr){2-4}
\cmidrule(lr){5-8}
\cmidrule(lr){9-12}
\multicolumn{1}{l}{Problem} & $n$ & $m$ & \multicolumn{1}{c}{$p$} &
	\multicolumn{2}{c}{Eig} & \multicolumn{2}{c}{SVD} & 
	\multicolumn{2}{c}{vert.} & \multicolumn{2}{c}{horz.} & 	
	\multicolumn{2}{c}{time (sec.)} & \multicolumn{1}{c}{\% faster}\\
\midrule
\texttt{build}         &    48 &   1 &   1 &  19 &    4 &   25 &   57 &   6 &   4 &  13 &     4(4) &            0.113 &  \bfseries 0.049 &   132 \\
CSE2                   &    63 &   1 &  32 &   5 &    1 &    2 &   10 &   3 &   1 &   2 &     1(2) &            0.047 &  \bfseries 0.024 &    99 \\
\texttt{pde}           &    84 &   1 &   1 &   5 &    1 &    2 &    8 &   3 &   1 &   2 &     1(2) &            0.105 &  \bfseries 0.034 &   210 \\
\texttt{CDplayer}      &   120 &   2 &   2 &  10 &    3 &   13 &   36 &   4 &   3 &   6 &     3(4) &            0.252 &  \bfseries 0.086 &   192 \\
CM3                    &   123 &   1 &   3 &   6 &    2 &    8 &   64 &   3 &   2 &   3 &     2(2) &            0.253 &  \bfseries 0.106 &   139 \\
\texttt{heat-cont}     &   200 &   1 &   1 &   5 &    1 &    2 &   33 &   3 &   1 &   2 &     1(1) &            0.552 &  \bfseries 0.091 &   504 \\
\texttt{heat-disc}     &   200 &   1 &   1 &   5 &    1 &    2 &    8 &   3 &   1 &   2 &     1(1) &            0.972 &  \bfseries 0.190 &   410 \\
\texttt{random}$^*$    &   200 &   1 &   1 &   6 &    2 &    8 &   87 &   3 &   2 &   3 &     2(2) &            0.664 &  \bfseries 0.279 &   138 \\
CM4                    &   243 &   1 &   3 &  10 &    2 &   17 &   48 &   3 &   2 &   7 &     2(2) &            1.785 &  \bfseries 0.304 &   487 \\
\texttt{tline}$^*$     &   256 &   2 &   2 &   9 &    2 &   11 &   33 &   3 &   2 &   6 &     2(2) &            6.377 &  \bfseries 1.722 &   270 \\
\texttt{iss}           &   270 &   3 &   3 &   6 &    4 &    7 &   47 &   2 &   4 &   4 &     4(5) &            1.565 &  \bfseries 0.675 &   132 \\
\texttt{beam}$^*$      &   348 &   1 &   1 &   9 &    2 &   12 &   25 &   4 &   2 &   5 &     2(2) &            3.157 &  \bfseries 0.481 &   556 \\
CBM                    &   351 &   1 &   2 &   8 &    3 &   12 &   63 &   3 &   3 &   5 &     4(5) &            2.978 &  \bfseries 0.913 &   226 \\
\texttt{eady}          &   598 &   1 &   1 &   7 &    1 &    5 &    7 &   3 &   1 &   4 &     1(2) &            8.221 &  \bfseries 0.987 &   733 \\
\texttt{fom}           &  1006 &   1 &   1 &  12 &    3 &   15 &   31 &   4 &   3 &   8 &     4(6) &           61.139 &  \bfseries 8.852 &   591 \\
\midrule
\multicolumn{4}{r|}{Totals:} & 122 & 32 & 141 & 557 & \multicolumn{6}{r|}{Average \% faster:} & 321 \\ 
\midrule
\multicolumn{14}{r|}{(Directly extended with horz. search first) Average \% faster:} & 254 \\ 
\bottomrule
\end{tabular}
\caption{
For each pair of columns, performance data is given
for the directly-extended (DE) approach (left) and our improved approach (right)
for computing the spectral value set abscissa.
Problems marked with asterisks denote where the DE variant had
relative errors greater than $10^{-10}$.
The ``Eig" column gives the total number of $2n \times 2n$ eigensolves computed, while the ``SVD" 
column gives the total number of evaluations of the norm of the transfer function.
The number of vertical and horizontal searches are given under the ``vert." and ``horz." headers, respectively;
for our new method, 
if the total number of ``horz." searches was greater than the number that \emph{actually needed to be solved},
the latter is given first, with the former given in parenthesis. 
The time for the faster of the two methods is in bold.
Positive percentages in the ``\% faster" column indicate the amount faster
our new method was compared to the DE variant 
while negative ones indicate the amount faster 
the DE variant was.
The last row gives the average of the \% faster values for a second verson of DE
that starts with a horizontal search instead of a vertical one.
}
\label{table:svs_abs}
\end{table}

As our improved methods are the first to be able to compute the $\eps$-spectral value set 
abscissa and radius, there are no other available codes for comparison.  
Instead, we compared against our own implementations of the directly-extended (DE) variants 
described in \S\ref{sec:abs_old} and \S\ref{sec:rad_old} in order to show the benefits of our modifications.
While the values computed by both variants generally agreed with each other,
there were three examples, all abscissa problems, where the DE methods incurred relative errors
greater than $10^{-10}$ in magnitude; 
 these are marked with asterisks in Table~\ref{table:svs_abs}.
 Before analyzing these errors, we first present the performance results.

For the $\eps$-spectral value set abscissa tests, shown in Table~\ref{table:svs_abs},
we compared against two versions of the DE approach: 
one using a vertical search first and an alternative using an initial horizontal search,
though we only provide detailed per-problem performance statistics for the former. 
Overall, our method was much faster than the DE variant using a vertical search first:  
on average, our method was 321\% faster and up to 733\% faster (on \texttt{eady}).
In fact, our new approach was fastest on all 15 problems, all by significant margins;
even on CSE2, where performance difference was smallest, the DE variant
required about twice as much time.
Compared to the DE variant using an initial horizontal search, 
our new approach was still 254\% faster on average, 
underscoring that the majority of acceleration achieved is due to our new root-finding-based method and 
not just the simple (though beneficial) idea of starting with a horizontal search.

\begin{table}[!t]
\small
\setlength{\tabcolsep}{3.5pt}
\robustify\bfseries
\center
\begin{tabular}{ l | ccc |cc | cc | cc | cl | @{}S@{}S | c } 
\toprule
\multicolumn{15}{c}{Spectral Value Set Radius: directly extended versus new method} \\
\midrule
\multicolumn{1}{c}{} & \multicolumn{3}{c}{Dimensions} &
	\multicolumn{4}{c}{\# solves} & \multicolumn{4}{c}{\# searches} & 
	\multicolumn{3}{c}{} \\
\cmidrule(lr){2-4}
\cmidrule(lr){5-8}
\cmidrule(lr){9-12}
\multicolumn{1}{l}{Problem} & $n$ & $m$ & \multicolumn{1}{c}{$p$} &
	\multicolumn{2}{c}{Eig} & \multicolumn{2}{c}{SVD} & 
	\multicolumn{2}{c}{circ.} & \multicolumn{2}{c}{rad.} & 
	\multicolumn{2}{c}{time (sec.)} & \multicolumn{1}{c}{\% faster}\\
\midrule
\texttt{build}         &    48 &   1 &   1 &   6 &    3 &   13 &   34 &   3 &   3 &   3 &     4(5) &            0.042 &  \bfseries 0.031 &    36 \\
CSE2                   &    63 &   1 &  32 &   4 &    2 &    8 &   22 &   2 &   2 &   2 &     2(2) &            0.051 &  \bfseries 0.036 &    41 \\
\texttt{pde}           &    84 &   1 &   1 &   6 &    1 &    7 &   13 &   2 &   1 &   4 &     1(2) &            0.158 &  \bfseries 0.034 &   366 \\
\texttt{CDplayer}      &   120 &   2 &   2 &   2 &    1 &    5 &   18 &   1 &   1 &   1 &     1(1) &            0.090 &  \bfseries 0.051 &    76 \\
CM3                    &   123 &   1 &   3 &   4 &    2 &    9 &   54 &   2 &   2 &   2 &     2(2) &            0.208 &  \bfseries 0.133 &    56 \\
\texttt{heat-cont}     &   200 &   1 &   1 &   2 &    1 &    2 &   20 &   1 &   1 &   1 &     1(1) &            0.402 &  \bfseries 0.197 &   104 \\
\texttt{heat-disc}     &   200 &   1 &   1 &   3 &    1 &    3 &   12 &   1 &   1 &   2 &     1(2) &            0.803 &  \bfseries 0.193 &   317 \\
\texttt{random}        &   200 &   1 &   1 &   2 &    1 &    2 &   11 &   1 &   1 &   1 &     1(1) &            0.421 &  \bfseries 0.253 &    66 \\
CM4                    &   243 &   1 &   3 &   4 &    1 &    8 &   34 &   2 &   1 &   2 &     1(1) &            1.192 &  \bfseries 0.430 &   178 \\
\texttt{tline}         &   256 &   2 &   2 &  31 &    4 &   60 &  189 &   2 &   4 &  29 &    4(35) &           22.486 &  \bfseries 3.148 &   614 \\
\texttt{iss}           &   270 &   3 &   3 &   6 &    3 &   13 &   37 &   3 &   3 &   3 &     3(3) &            2.350 &  \bfseries 1.225 &    92 \\
\texttt{beam}          &   348 &   1 &   1 &   2 &    1 &    3 &   10 &   1 &   1 &   1 &     1(1) &            1.284 &  \bfseries 0.872 &    47 \\
CBM                    &   351 &   1 &   2 &   2 &    1 &    2 &   11 &   1 &   1 &   1 &     1(1) &            1.399 &  \bfseries 0.889 &    57 \\
\texttt{eady}          &   598 &   1 &   1 &   2 &    1 &    5 &   10 &   1 &   1 &   1 &     0(0) &            9.435 &  \bfseries 7.617 &    24 \\
\texttt{fom}           &  1006 &   1 &   1 &   2 &    1 &    2 &   20 &   1 &   1 &   1 &     1(1) &           63.508 & \bfseries 55.808 &    14 \\
\midrule
\multicolumn{4}{r|}{Totals:} & 78 & 24 & 142 & 495 & \multicolumn{4}{c}{} & \multicolumn{2}{r|}{Average \% faster:} & 139 \\ 
\bottomrule
\end{tabular}
\caption{
The headers remain mostly as described in Table~\ref{table:svs_abs},
except instead, in the $\eps$-spectral value set radius,
the number of circular and radial searches are given under the ``circ." and ``rad." headers, respectively.
}
\label{table:svs_rad}
\end{table}

In Table~\ref{table:svs_rad}, the corresponding experiments are shown 
for the $\eps$-spectral value set radius tests. 
Again our method was fastest on all 15 test problems; 
on average it was 139\% faster than the DE variant and 
up to 614\% faster (on \texttt{tline}).
However, on \texttt{eady}, and \texttt{fom}, the performance gains were rather small (14\% and 24\% faster, respectively).
The less pronounced performance gains against the DE variant on the radius problems seem 
to be due to the fact that, on average, 
the DE variant converged in fewer iterations for the radius case than it did for the abscissa case.

Returning to the three abscissa problems where the DE variants had the highest errors, 
all were caused by rounding errors when computing the imaginary eigenvalues in the \texttt{eig}-based
horizontal and/or vertical searches.  
On both \texttt{beam} and \texttt{random} ($1.19 \times 10^{-9}$ and $1.60 \times 10^{-9}$ relative errors, respectively), rounding errors in computing 
imaginary eigenvalues for the final horizontal search
caused the computed abscissa values to be slightly too large.
In contrast, the relative error of  $-2.38 \times 10^{-8}$ on \texttt{tline} was due to
rounding errors in both the horizontal and vertical searches.
On the last horizontal search, rounding errors in the computed imaginary eigenvalues caused the computed boundary point to be slightly inside the spectral value set.  Another vertical search was then attempted but failed to return any boundary points, 
again due to rounding errors in computing the imaginary eigenvalues.
Hence, the DE variant stopped a bit short of the spectral value set abscissa.
This particular failure underscores the importance of using structure-preserving eigensolvers 
for the vertical (and circular searches), even with our more numerically reliable root-finding-based approach.
However, as we will see in \S\ref{sec:num_ps}, even structure-preserving eigensolvers are not a panacea 
for the numerical issues that can arise in eigenvalue-based searches.


\subsection{Pseudospectral evaluation}
\label{sec:num_ps}
We also evaluated our new methods against
the \texttt{pspa} and \texttt{pspr} codes, using matrices of order 200 from EigTool~\cite{EigTool},  with \mbox{$\eps=0.01$}.
For brevity, we defer the full performance tables and detailed discussion to Appendix~\ref{apdx:pseudo}
but we note that our new method was on average 190\% and 84\% faster for the pseudospectral abscissa and radius cases, respectively.  
Only on one example, \texttt{orrsommerfeld\_demo} for the abscissa case, did \texttt{pspa}
incur a significant relative error ($1.75 \times 10^{-9}$).
Rounding errors in the eigenvalue value computations caused the 
horizontal searches to repeatedly overshoot the true abscissa value;
\texttt{pspa} not only incurred more iterations than necessary, it did so while also making its accuracy even worse. 
In Figure~\ref{fig:eig_errors}, we show an example of this phenomenon 
when computing the pseudospectral abscissa of \texttt{orrsommerfeld\_demo(201)}
for $\eps=10^{-4}$, where the relative error was even more pronounced: $7.75 \times 10^{-7}$.
When replacing \texttt{eig} by a structure-preserving eigensolver from SLICOT,
the relative error from \texttt{pspa} was $-4.68\times10^{-9}$.

\begin{figure}[!t]
\center
\subfloat[\texttt{pspa} with \texttt{eig}]{
\includegraphics[scale=.2395,trim={0.8cm 1cm 1.86cm 1cm},clip]{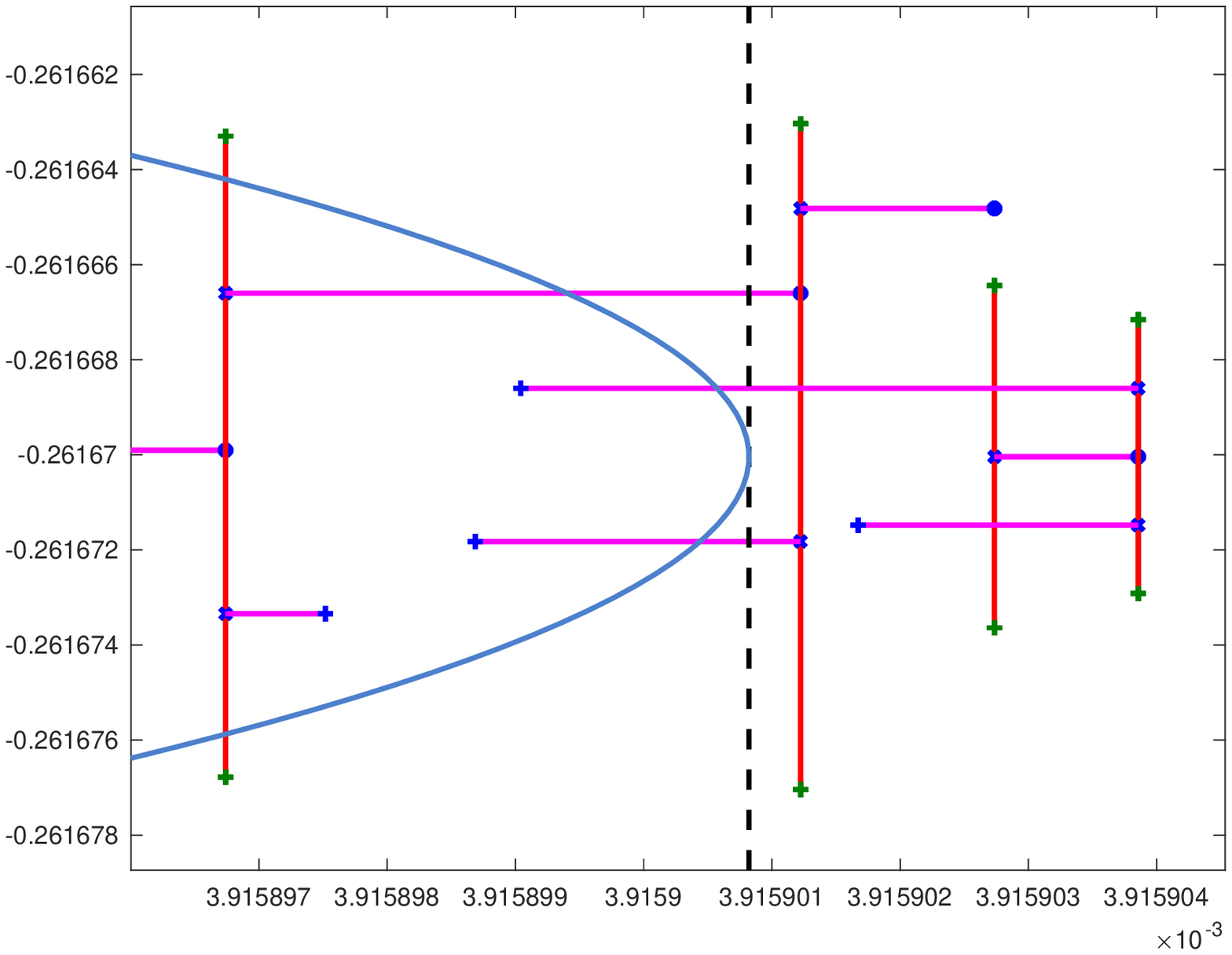} 
\label{fig:error_eig}
}
\subfloat[\texttt{pspa} with \texttt{SLICOT}]{
\includegraphics[scale=.2395,trim={0.8cm 1cm 1.86cm 1cm},clip]{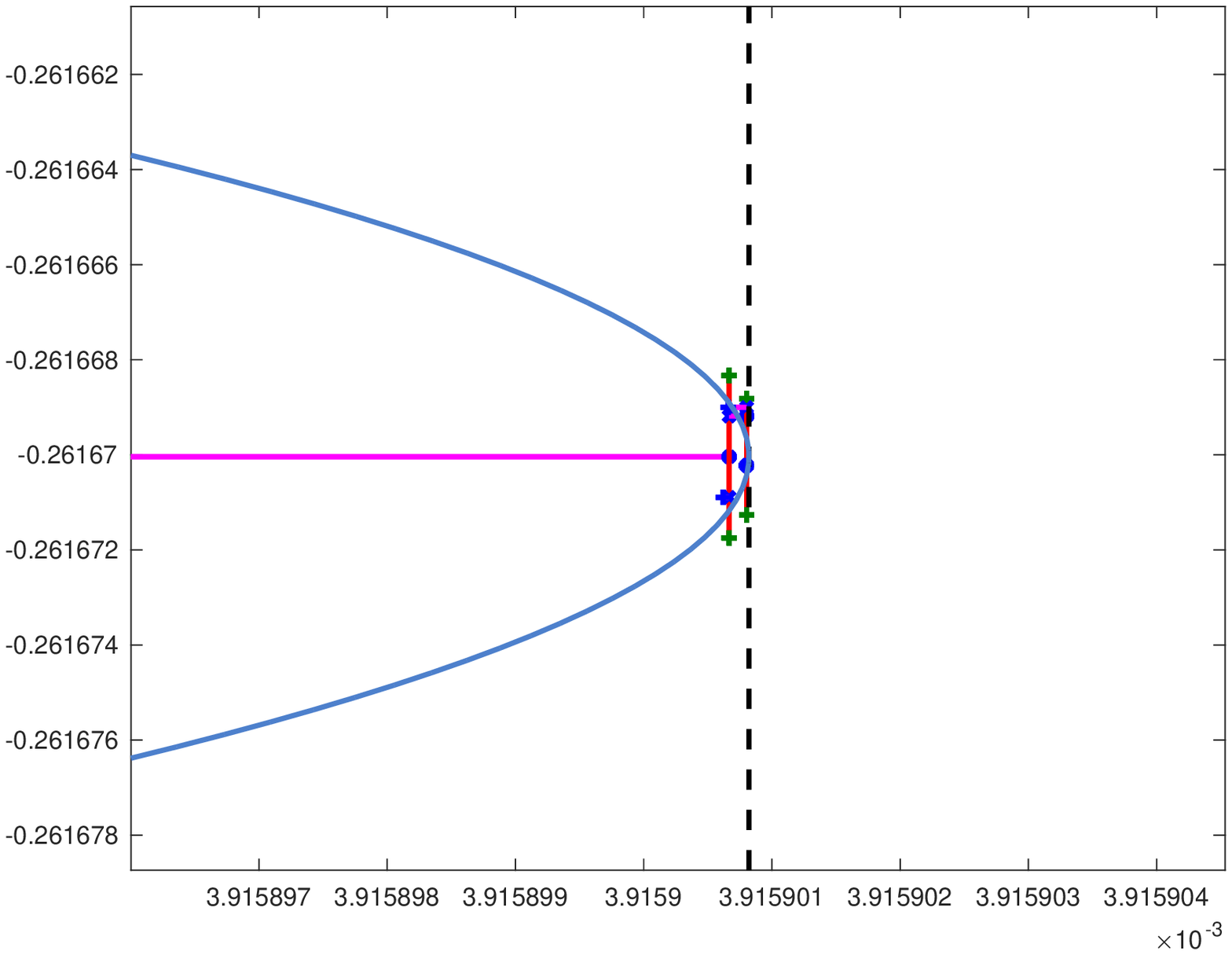} 
\label{fig:error_slicot}
}
\subfloat[\texttt{pspa} with \texttt{SLICOT} (close up)]{
\includegraphics[scale=.2395,trim={0.6cm 1cm 1.86cm 1cm},clip]{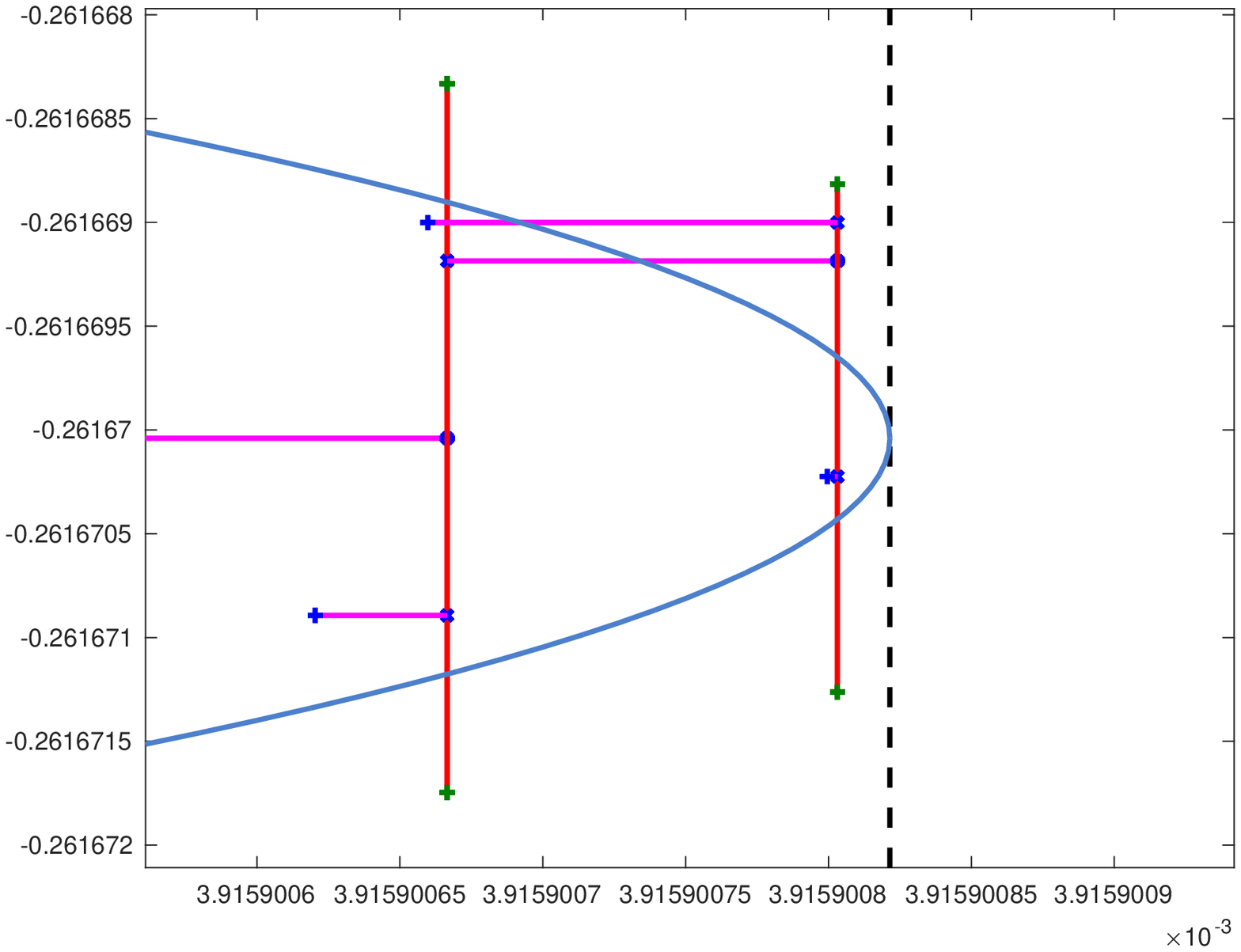} 
\label{fig:error_slicot_zoom}
}
\caption{The plotting output of \texttt{pspa} is shown for \texttt{orrsommerfeld\_demo(201)} and $\eps=10^{-4}$, 
demonstrating how inaccuracies in the eigensolves can lead to a significant loss of digits. 
The value of the pseudospectral abscissa computed by our improved method is shown by the grey dashed line, while the pseudospectral value boundary is shown by the blue curve, computed via \texttt{contour}.
}
\label{fig:eig_errors}
\end{figure}

\section{Conclusion}
\label{sec:wrapup}
By extending and improving upon the $\eps$-pseudospectral abscissa and radius algorithms of \cite{BurLO03} and \cite{MenO05},
we developed the first algorithms to compute, not just approximate,
the general $\eps$-spectral value set abscissa and radius to high accuracy.
Our experiments validate that our new root-finding-based approach 
is noticeably faster and more accurate than directly-extend approaches,
benefits that are also relevant for pseudospectra.
The new methods also use a novel new technique for handling 
singular pencils and/or problematic interior searches.

\section*{Acknowledgement}
The authors are grateful to the referees for many helpful comments to improve the manuscript
and to Emre Mengi and Michael L. Overton for discussions regarding the numerical subtleties 
of their criss-cross codes for the pseudospectral abscissa and radius.

{\small
\bibliographystyle{alpha}
\bibliography{csc,mor,software}  
}
\clearpage

\appendix

\begin{center}
\huge Supplementary Appendices
\end{center}

\section{Proofs of Theorems~\ref{thm:eigsing_cont}, \ref{thm:anyline_search}, and \ref{thm:eigsing_disc}}
\label{apdx:proofs}
\subsection{Proof of Theorem~\ref{thm:eigsing_cont}}
\label{apdx:eigsing_cont_proof}
\begin{proof}
Let $\gamma$ be a singular value of $G(\xiy)$ with left and right singular vectors $u$ and $v$,
that is, so that $G(\xiy)v = \gamma u$ and $G(\xiy)^*u = \gamma v$.  
Using the expanded versions of these two equivalences 
\beq
\begin{aligned}
	\label{eq:tfsv_equiv_cont}
	\left( \tfqs{(\xiy)} \right) v 	&= \gamma u 
	\quad \text{and} \\
	\left( \tfqs{(\xiy)} \right)^* u  &= \gamma v,
\end{aligned}
\eeq
we define 
\beq
	\label{eq:qs_cont}
	q = \left( (\xiy) E - A \right)^{-1}Bv 
	\quad \text{and} \quad 
	s = \left((\xiyconj) E^* - A^* \right)^{-1}C^*u.
\eeq
Rewriting \eqref{eq:tfsv_equiv_cont} using \eqref{eq:qs_cont} yields the following matrix equation:
\beq
	\label{eq:uv_cont}
	\begin{bmatrix} C & 0 \\ 0 & B^* \end{bmatrix} 
	\begin{bmatrix} q \\ s \end{bmatrix} 
	= 
	\begin{bmatrix} -D & \gamma I \\ \gamma I & -D^* \end{bmatrix} 
	\begin{bmatrix} v \\ u \end{bmatrix} 
	~\Longrightarrow~
	\begin{bmatrix} v \\ u \end{bmatrix} 
	= 
	\begin{bmatrix} -D & \gamma I \\ \gamma I & -D^* \end{bmatrix}^{-1}
	\begin{bmatrix} C & 0 \\ 0 & B^* \end{bmatrix} 
	\begin{bmatrix} q \\ s \end{bmatrix} ,
\eeq
where
\beq
	\label{eq:Dgamma_inv_cont}
	\begin{bmatrix} -D & \gamma I \\ \gamma I & -D^* \end{bmatrix}^{-1}
	= 
	\begin{bmatrix} -R^{-1}D^* & -\gamma R^{-1} \\ -\gamma S^{-1} & -DR^{-1} \end{bmatrix}
	\quad \text{and} \quad
	\begin{bmatrix} q \\ s \end{bmatrix} \ne 0.
\eeq
Rewriting \eqref{eq:qs_cont} as a matrix equation gives:
\beq
	\label{eq:EAqs1_cont}
	\left(
	\begin{bmatrix} (\xiy) E & 0 \\ 0 & (\xiyconj) E^* \end{bmatrix} -
	\begin{bmatrix} A & 0 \\ 0 & A^* \end{bmatrix}
	\right)
	\begin{bmatrix} q \\ s \end{bmatrix}
	= 
	\begin{bmatrix} B & 0 \\ 0 & C^* \end{bmatrix} 
	\begin{bmatrix} v \\ u \end{bmatrix}.
\eeq
Substituting in \eqref{eq:uv_cont} for the rightmost term of \eqref{eq:EAqs1_cont} yields
\beq
	\label{eq:EAqs2_cont}
	\begingroup
	\setlength\arraycolsep{1.7pt}
	\left(
	\begin{bmatrix} (\xiy)E & 0 \\ 0 & (\xiyconj) E^* \end{bmatrix}  -
	\begin{bmatrix} A & 0 \\ 0 & A^* \end{bmatrix} 
	\right)
	\begin{bmatrix} q \\ s \end{bmatrix} 
	= 
	\begin{bmatrix} B & 0 \\ 0 & C^* \end{bmatrix} 
	\begin{bmatrix} -D & \gamma I \\ \gamma I & -D^* \end{bmatrix}^{-1}
	\begin{bmatrix} C & 0 \\ 0 & B^* \end{bmatrix} 
	\begin{bmatrix} q \\ s \end{bmatrix} .
	\endgroup
\eeq
Bringing over terms from the left side to separate out $\iy$ and 
substituting the inverse on the right using \eqref{eq:Dgamma_inv_cont} and then multiplying out the matrix terms, 
we have
\beqs
	\label{eq:EAqs3_cont}
	\begingroup
	\setlength\arraycolsep{3pt}
	\iy \begin{bmatrix} E & 0 \\ 0 & -E^* \end{bmatrix}  
	\begin{bmatrix} q \\ s \end{bmatrix} 
	=
	\begin{bmatrix} A -xE & 0 \\ 0 & A^* -xE^* \end{bmatrix} 
	\begin{bmatrix} q \\ s \end{bmatrix} 
	+ 
	\begin{bmatrix} -BR^{-1}D^*C  & -\gamma BR^{-1}B^* \\
		 -\gamma C^*S^{-1}C & -C^*DR^{-1}B^* \end{bmatrix} 
	\begin{bmatrix} q \\ s \end{bmatrix} .
	\endgroup
\eeqs
Combining the matrices on the right and multiplying by
\[
	\begin{bmatrix} I & 0\\ 0 & -I \end{bmatrix}
\]
yields:
\beqs
	\begingroup
	\setlength\arraycolsep{3pt}
	\iy \begin{bmatrix} E & 0 \\ 0 & E^* \end{bmatrix}  
	\begin{bmatrix} q \\ s \end{bmatrix} 
	=
	\begin{bmatrix} (A -xE -BR^{-1}D^*C)  & -\gamma BR^{-1}B^* \\
		 \gamma C^*S^{-1}C & -(A -xE -BR^{-1}D^*C)^* \end{bmatrix} 
	\begin{bmatrix} q \\ s \end{bmatrix} .
	\endgroup
\eeqs
It is now clear that $\iy$ is an eigenvalue of the matrix pencil $\MNpencont$.

Now suppose that $\iy$ is an eigenvalue of pencil $\MNpencont$ with eigenvector
given by $q$ and $s$ as above.  Then it follows that \eqref{eq:EAqs2_cont} holds, which can
be rewritten as \eqref{eq:EAqs1_cont} by defining $u$ and $v$ using the right-hand side equation of \eqref{eq:uv_cont}, noting that neither can be identically zero.  It is then clear that the two equivalences in \eqref{eq:qs_cont} both hold.  Finally, substituting \eqref{eq:qs_cont} into the left-hand side equation of \eqref{eq:uv_cont}, it is clear that $\gamma$ is a singular value 
of $G(\xiy)$, with left and right singular vectors $u$ and $v$.
\end{proof}

\subsection{Proof of Theorem~\ref{thm:anyline_search}}
\begin{proof}
\sloppy
By Theorem~\ref{thm:eigsing_cont} and Corollary~\ref{cor:svs_tf}, 
$-s + \imagunit \omega_j$ must be all the boundary points of 
$\SVSeps( e^{\imagunit \theta_\mathrm{r}} A, e^{\imagunit \theta_\mathrm{r}} B, C, D, E)$ along the vertical line defined by $x=-s$.
Recalling \eqref{eq:MDelta}, 
since this spectral value set is entirely composed of eigenvalues of $(e^{\imagunit \theta_\mathrm{r}} M(\Delta),E)$, 
multiplying $e^{\imagunit \theta_\mathrm{r}} M(\Delta)$ by $e^{-\imagunit \theta_\mathrm{r}}$ is equivalent to a rotation 
about the origin by angle $-\theta_\mathrm{r}$, which yields $\SVSeps(A,B,C,D,E)$.
Since $\theta_\mathrm{r} = \pi/2 - \theta$, this specific rotation also moves all points $-s + \imagunit \omega_j$ precisely onto the line $L(\theta,s)$
and thus $\lambda_j = e^{-\imagunit \theta_\mathrm{r}}(-s + \imagunit \omega_j)$ are all the boundary points 
of $\SVSeps(A,B, C, D, E)$ along $L(\theta,s)$. 
\end{proof}

\subsection{Proof of Theorem~\ref{thm:eigsing_disc}}
\label{apdx:eigsing_disc_proof}
\begin{proof}
Let $\gamma$ be a singular value of $G(r\eitheta)$ with left and right singular vectors $u$ and $v$,
that is, so that $G(r\eitheta)v = \gamma u$ and $G(r\eitheta)^*u = \gamma v$.  
Using the expanded versions of these two equivalences 
\beq
	\label{eq:tfsv_equiv_disc}
	\left( \tfqs{r\eitheta} \right) v 	= \gamma u 
	\quad \text{and} \quad 
	\left( \tfqs{r\eitheta} \right)^* u  = \gamma v,
\eeq
we define 
\beq
	\label{eq:qs_disc}
	q = \left( r\eitheta E - A \right)^{-1}Bv 
	\quad \text{and} \quad 
	s = \left( r\eithetaconj E^* - A^* \right)^{-1}C^*u.
\eeq
Similar to the proof of Theorem~\ref{thm:eigsing_cont}, it follows that 
\beq
	\label{eq:EAqs2_disc}
	\setlength\arraycolsep{3pt}
	\begingroup
	\left(
	\begin{bmatrix} \eitheta rE & 0 \\ 0 & r\eithetaconj E^* \end{bmatrix}  -
	\begin{bmatrix} A & 0 \\ 0 & A^* \end{bmatrix} 
	\right)
	\begin{bmatrix} q \\ s \end{bmatrix} 
	= 
	\begin{bmatrix} B & 0 \\ 0 & C^* \end{bmatrix} 
	\begin{bmatrix} -D & \gamma I \\ \gamma I & -D^* \end{bmatrix}^{-1}
	\begin{bmatrix} C & 0 \\ 0 & B^* \end{bmatrix} 
	\begin{bmatrix} q \\ s \end{bmatrix} .
	\endgroup
\eeq
Furthermore, the rightmost three terms of \eqref{eq:EAqs2_disc} can again be replaced by first substituting 
the matrix inverse with its explicit form given by \eqref{eq:Dgamma_inv_cont} and then multiplying these three terms together.  
Then, multiplying on the left by 
\[
	\begin{bmatrix} I & 0 \\ 0 & -\eitheta I \end{bmatrix}
\]
and rearranging terms yields
\beqs
	\eitheta \begin{bmatrix} rE & 0 \\ 0 & A^* \end{bmatrix}  
	\begin{bmatrix} q \\ s \end{bmatrix} 
	=
	\begin{bmatrix} A & 0 \\ 0 & rE^* \end{bmatrix} 
	\begin{bmatrix} q \\ s \end{bmatrix} 
	+ 
	\begin{bmatrix} B & 0 \\ 0 & -\eitheta C^* \end{bmatrix}  
	\begin{bmatrix} -R^{-1}D^*C  & -\gamma R^{-1}B^* \\
		 -\gamma S^{-1}C & -DR^{-1}B^* \end{bmatrix} 
	\begin{bmatrix} q \\ s \end{bmatrix} .
\eeqs
Separating and then bringing the $-\eitheta$ terms over to the left side, we obtain 
\beqs
	\eitheta \begin{bmatrix} rE & 0 \\ -\gamma C^*S^{-1}C & A^* - C^*DR^{-1}B^* \end{bmatrix}  
	\begin{bmatrix} q \\ s \end{bmatrix} 
	=
	\begin{bmatrix} A -  BDR^{-1}B^* & -\gamma BR^{-1}B^* \\ 0 & rE^* \end{bmatrix}
	\begin{bmatrix} q \\ s \end{bmatrix} ,
\eeqs
and thus it is clear that $\eitheta$ is an eigenvalue of the matrix pencil $\MNpendisc$.

The reverse implication follows similarly to the reverse argument given in Appendix~\ref{apdx:eigsing_cont_proof} for the proof of Theorem~\ref{thm:eigsing_cont}.
\end{proof}

\section{Pseudospectral evaluation (complete version)}
\label{apdx:pseudo}
To compare our new improved algorithms with the original criss-cross methods for 
computing the pseudospectral abscissa and radius,
we also tested against \texttt{pspa} and \texttt{pspr}, respectively.
We used 20 of the 21 examples from EigTool, all of order 200 (i.e. $n=m=p=200$),
using $\eps=0.01$;
we could not include \texttt{companion\_demo} as it contains
\texttt{infs} and \texttt{nans} at this size.
To collect the same detailed performance data as provided in \S\ref{sec:num_svs}, 
we added simple counters inside the \texttt{pspa} and \texttt{pspr} routines.
Table~\ref{table:ps_abs} and \ref{table:ps_rad} give the respective performance comparisons for the 
pseudospectral abscissa and radius cases.

Note that when $B=C=I_n$ and $D=0$, by default \texttt{specValSet} 
does not compute the largest singular value of $G(\lambda)$ but instead equivalently
computes the reciprocal of the smallest singular value of $\lambda E - A$.
This is significantly more efficient  than using $G(\lambda)$, which requires $(\lambda E - A)^{-1}$. 
Furthermore, with this smallest singular value approach,
the cost of obtaining the first and second derivatives is essentially negligible.

However, during testing and development of \texttt{specValSet}, we noticed that \texttt{svd} may sometimes return extremely inaccurate results for the smaller singular values when singular vectors are also requested 
(which we need for the first and second derivatives).
This numerical issue appears to be due to the underlying \texttt{GESDD} routine from LAPACK, which 
is used in MATLAB whenever singular vectors are requested 
(right \emph{and} left on R2017b and earlier and right \emph{or}
 left on R2018a and newer) and the minimum dimension 
of the matrix is at least 26.
If a given matrix $A$ is very poorly scaled, 
\texttt{GESDD} tends to compute all singular values below $\|A\|_2 \cdot \emach$
as all being about $\|A\|_2 \cdot \emach$.
This means these ``computed" smaller singular values
may have \emph{zero} digits of accuracy.
As this can be quite problematic,  
\texttt{specValSet} also allows the user to optionally revert to the more expensive choice of 
computing the largest singular value of $G(\lambda)$, as a temporary workaround
until \texttt{svd} and \texttt{GESDD} are improved.
We have notified the LAPACK maintainers and MathWorks about this issue with \texttt{GESDD} and \texttt{svd}.\footnote{For more info, see \url{https://github.com/Reference-LAPACK/lapack/issues/316}.}

So far, we have only observed this bad numerical behavior of \texttt{svd} occurring on one example,
\texttt{companion\_demo}, which is not in our test set anyway due to its extreme scaling
(the norm of \texttt{companion\_demo(26)} is $6.09 \times 10^{26}$ and this grows as $n$ is increased).
As such, we still used the more efficient smallest singular value of $\lambda E -A$ approach for 
all problems in the evaluation.
Furthermore, for all, we also confirmed that there were no 
accuracy issues with the pseudospectral abscissa and radius values 
computed by our new methods.
Nevertheless, it is conceivable that this numerical issue with \texttt{svd} may have resulted in less accurately computed  first- and second- order derivatives,
which in turn, could have caused more function evaluations in the rooting finding than should have been necessary.

\begin{table}[t]
\footnotesize
\setlength{\tabcolsep}{3.5pt}
\robustify\bfseries
\center
\begin{tabular}{ l | cc | cc | cc | cl | @{}S@{} S | c } 
\toprule
\multicolumn{12}{c}{Pseudospectral Abscissa ($\eps=0.01$): \texttt{pspa} versus new method} \\
\midrule
\multicolumn{1}{c}{} & 
	\multicolumn{4}{c}{\# solves} & \multicolumn{4}{c}{\# searches} & 
	\multicolumn{3}{c}{} \\
\cmidrule(lr){2-5}
\cmidrule(lr){6-9}
\multicolumn{1}{l}{Problem} & 
	\multicolumn{2}{c}{Eig} & \multicolumn{2}{c}{SVD} & 
	\multicolumn{2}{c}{vert.} & \multicolumn{2}{c}{horz.} & 	
	\multicolumn{2}{c}{time (sec.)} & \multicolumn{1}{c}{\% faster}\\
\midrule
\texttt{airy(201)}               &  13 &    4 &   10 &   38 &   4 &   4 &   9 &     4(6) &            1.748 &  \bfseries 0.886 &    97 \\
\texttt{basor(200)}              &   9 &    2 &    0 &   13 &   4 &   2 &   5 &     2(2) &            1.446 &  \bfseries 0.494 &   193 \\
\texttt{chebspec(201)}           &   5 &    2 &    4 &   34 &   2 &   2 &   3 &     2(3) &            0.626 &  \bfseries 0.485 &    29 \\
\texttt{convdiff(201)}           &   4 &    1 &    0 &   23 &   2 &   1 &   2 &     1(1) &            0.349 &  \bfseries 0.188 &    85 \\
\texttt{davies(201)}             &   5 &    1 &    0 &    6 &   2 &   1 &   3 &     1(1) &            0.787 &  \bfseries 0.229 &   243 \\
\texttt{demmel(200)}             &  15 &    6 &   12 &   75 &   7 &   6 &   8 &     6(6) &            1.692 &  \bfseries 1.222 &    39 \\
\texttt{frank(200)}              &   3 &    1 &    0 &   14 &   2 &   1 &   1 &     1(1) &            0.414 &  \bfseries 0.181 &   129 \\
\texttt{gaussseidel({200,'C'})}  &   5 &    1 &   10 &    5 &   3 &   1 &   2 &     1(1) &            1.013 &  \bfseries 0.148 &   583 \\
\texttt{gaussseidel({200,'D'})}  &   5 &    2 &    0 &   20 &   3 &   2 &   2 &     2(3) &            0.754 &  \bfseries 0.375 &   101 \\
\texttt{gaussseidel({200,'U'})}  &   5 &    2 &    0 &   20 &   3 &   2 &   2 &     2(3) &            1.044 &  \bfseries 0.536 &    95 \\
\texttt{grcar(200)}              &   3 &    1 &    4 &   11 &   2 &   1 &   1 &     1(2) &            0.418 &  \bfseries 0.205 &   104 \\
\texttt{hatano(200)}             &   4 &    1 &    0 &    5 &   2 &   1 &   2 &     1(1) &            0.598 &  \bfseries 0.121 &   395 \\
\texttt{kahan(200)}              &   4 &    1 &    0 &    8 &   2 &   1 &   2 &     1(1) &            0.478 &  \bfseries 0.138 &   247 \\
\texttt{landau(200)}             &  14 &    2 &    2 &   13 &   5 &   2 &   9 &     2(2) &            1.091 &  \bfseries 0.334 &   227 \\
\texttt{orrsommerfeld(201)}$^*$  &  15 &    4 &    2 &   34 &   7 &   4 &   8 &     4(4) &            1.900 &  \bfseries 0.820 &   132 \\
\texttt{random(200)}             &   4 &    2 &    0 &   16 &   2 &   2 &   2 &     2(3) &            0.557 &  \bfseries 0.334 &    67 \\
\texttt{randomtri(200)}          &   3 &    1 &   40 &   10 &   2 &   1 &   1 &     1(1) &            0.639 &  \bfseries 0.152 &   322 \\
\texttt{riffle(200)}             &   7 &    1 &    2 &    8 &   4 &   1 &   3 &     1(1) &            0.427 &  \bfseries 0.099 &   331 \\
\texttt{transient(200)}          &   6 &    2 &    0 &   16 &   3 &   2 &   3 &     2(2) &            1.180 &  \bfseries 0.618 &    91 \\
\texttt{twisted(200)}            &   9 &    2 &    8 &   14 &   4 &   2 &   5 &     2(2) &            1.349 &  \bfseries 0.339 &   298 \\
\midrule
\multicolumn{1}{r|}{Totals:} & 138 & 39 & 94 & 383 & \multicolumn{6}{r|}{Average \% faster:} & 190\\ 
\bottomrule
\end{tabular}
\caption{
The headers remain as described in Table~\ref{table:svs_abs} in the main paper,
except now performance data is given for
\texttt{pspa} (left columns) and our improved approach (right columns)
for computing the pseudospectral abscissa on problems from EigTool, all of order 200 ($=n=m=p$).
}
\label{table:ps_abs}
\end{table}

Returning to our performance comparison, as mentioned in \S\ref{sec:num_ps}, our new method was on average 190\% and 84\% faster for the pseudospectral abscissa and radius cases, respectively.
In contrast to our spectral value set evaluation, where the DE variants were significantly less accurate
than our newer methods on four of the problems, \texttt{pspa} and \texttt{pspr} returned answers which had a high numerical agreement with the accurate values computed by our improved methods on all but one problem: \texttt{orrsommerfeld\_demo} for the abscissa case, where the relative error was $1.75 \times 10^{-9}$.
Rounding errors in the eigenvalue value computations caused the 
horizontal searches to repeatedly overshoot the true abscissa value;
\texttt{pspa} not only incurred more iterations than necessary, it did so while also making its accuracy even worse. 
In Figure~\ref{fig:eig_errors}, an example of this phenomenon is shown 
for computing the pseudospectral abscissa of \texttt{orrsommerfeld\_demo(201)}
with $\eps=10^{-4}$, where the relative error was even more pronounced: $7.75 \times 10^{-7}$.
When replacing \texttt{eig} by a structure-preserving eigensolver from SLICOT,
the relative error from \texttt{pspa} was $-4.68\times10^{-9}$; in this case,
\texttt{pspa} stagnated a bit too early, due to the vertical search failing to return 
the vertical cross section.

\begin{table}[t]
\footnotesize
\setlength{\tabcolsep}{3.5pt}
\robustify\bfseries
\center
\begin{tabular}{ l | cc | cc | cc | cl | @{}S@{} S | c} 
\toprule
\multicolumn{12}{c}{Pseudospectral Radius ($\eps=0.01$): \texttt{pspr} versus new method} \\
\midrule
\multicolumn{1}{c}{} & 
	\multicolumn{4}{c}{\# solves} & \multicolumn{4}{c}{\# searches} & 
	\multicolumn{3}{c}{} \\
\cmidrule(lr){2-5}
\cmidrule(lr){6-9}
\multicolumn{1}{l}{Problem} & 
	\multicolumn{2}{c}{Eig} & \multicolumn{2}{c}{SVD} & 
	\multicolumn{2}{c}{circ.} & \multicolumn{2}{c}{rad.} & 
	\multicolumn{2}{c}{time (sec.)} & \multicolumn{1}{c}{\% faster}\\
\midrule
\texttt{airy(201)}               &   5 &    2 &   19 &   28 &   2 &   2 &   3 &     2(3) &            0.953 &  \bfseries 0.800 &    19 \\
\texttt{basor(200)}              &   7 &    2 &   15 &   18 &   3 &   2 &   4 &     2(2) &            1.531 &  \bfseries 0.868 &    76 \\
\texttt{chebspec(201)}           &   2 &    1 &   11 &   14 &   1 &   1 &   1 &     1(1) &            0.315 &  \bfseries 0.292 &     8 \\
\texttt{convdiff(201)}           &   2 &    1 &    9 &   11 &   1 &   1 &   1 &     1(1) &            0.289 &  \bfseries 0.225 &    28 \\
\texttt{davies(201)}             &   2 &    1 &   10 &   13 &   1 &   1 &   1 &     1(1) &  \bfseries 0.409 &            0.424 &    -4 \\
\texttt{demmel(200)}             &   2 &    1 &    7 &   16 &   1 &   1 &   1 &     1(1) &            0.275 &  \bfseries 0.234 &    17 \\
\texttt{frank(200)}              &   3 &    1 &    6 &   19 &   1 &   1 &   2 &     1(1) &            0.494 &  \bfseries 0.290 &    71 \\
\texttt{gaussseidel({200,'C'})}  &   2 &    1 &    2 &   10 &   1 &   1 &   1 &     1(1) &            0.403 &  \bfseries 0.299 &    35 \\
\texttt{gaussseidel({200,'D'})}  &   6 &    1 &   10 &   17 &   3 &   1 &   3 &     1(2) &            0.993 &  \bfseries 0.312 &   218 \\
\texttt{gaussseidel({200,'U'})}  &   4 &    1 &   11 &   21 &   2 &   1 &   2 &     1(3) &            1.245 &  \bfseries 0.415 &   200 \\
\texttt{grcar(200)}              &  10 &    5 &   34 &   57 &   5 &   5 &   5 &     6(6) &            1.969 &  \bfseries 1.597 &    23 \\
\texttt{hatano(200)}             &   2 &    1 &    6 &   10 &   1 &   1 &   1 &     1(1) &            0.402 &  \bfseries 0.294 &    37 \\
\texttt{kahan(200)}              &   7 &    1 &   19 &   12 &   3 &   1 &   4 &     1(2) &            1.103 &  \bfseries 0.241 &   357 \\
\texttt{landau(200)}             &   7 &    2 &   16 &   16 &   3 &   2 &   4 &     2(2) &            1.419 &  \bfseries 0.547 &   160 \\
\texttt{orrsommerfeld(201)}      &   2 &    1 &   11 &   14 &   1 &   1 &   1 &     1(1) &            0.666 &  \bfseries 0.424 &    57 \\
\texttt{random(200)}             &   6 &    4 &   26 &   48 &   3 &   4 &   3 &     6(6) &            1.546 &  \bfseries 1.368 &    13 \\
\texttt{randomtri(200)}          &   5 &    1 &   15 &   14 &   2 &   1 &   3 &     1(2) &            0.891 &  \bfseries 0.252 &   254 \\
\texttt{riffle(200)}             &   2 &    1 &    2 &   14 &   1 &   1 &   1 &     1(2) &            0.292 &  \bfseries 0.227 &    28 \\
\texttt{transient(200)}          &   6 &    2 &   11 &   21 &   2 &   2 &   4 &     2(2) &            1.755 &  \bfseries 1.009 &    74 \\
\texttt{twisted(200)}            &   2 &    1 &    2 &   17 &   1 &   1 &   1 &     1(1) &            0.412 &  \bfseries 0.411 &     0 \\
\midrule
\multicolumn{1}{r|}{Totals:} & 84 & 31 & 242 & 390 & \multicolumn{6}{r|}{Average \% faster:} & 84\\ 
\bottomrule
\end{tabular}
\caption{
The headers remain mostly unchanged from Table~\ref{table:ps_abs},
except now \texttt{pspr} (left columns) and our improved approach (right columns)
for computing the pseudospectral radius for the same problems;
correspondingly, the number of circular and radial searches are respectively given under the ``circ." and ``rad." headers.
}
\label{table:ps_rad}
\end{table}

In Table~\ref{table:ps_abs}, we see that our new method was faster than \texttt{pspa} on every single test problem and 190\% faster on average.  The largest performance gap was on \texttt{gaussseidel(200,'C')},
where our method was 583\% faster than \texttt{pspa}. 
The smallest performance gap was on \texttt{chebspec(201)}, where our method was 29\% faster than \texttt{pspa}.  
Over all problems, relative to \texttt{pspa}, we see that our method only needed about a quarter of the expensive eigenvalue computations, but required slightly more than four times the number of SVDs.
Nevertheless, the tradeoff was a success given the clear overall large reductions in running times.

For the pseudospectral radius comparison, shown in Table~\ref{table:ps_rad}, 
we see that our new method was faster on 19 out of the 20 problems,
but to a lesser degree.  At best, our new method was 357\% faster than \texttt{pspr} (on \texttt{kahan(200)})
and 84\% faster on average.  On the only problem where our new method was slower than \texttt{pspr} (\texttt{davies(201)}), 
the performance difference was rather negligible at just 4\% slower.
The smaller performance improvement on the radius problems
appears to be due to the fact that on ten of the problems, \texttt{pspr} only needed just one circular search
before convergence was met.
Indeed, compared to the abscissa case, 
the total number of expensive eigenvalue computations incurred by \texttt{pspr} was
simply much less than that by \texttt{pspa}, as was the relative reduction of them afforded by our new method.

\end{document}